\numberwithin{equation}{section}
\newcommand{\ie}{\emph{i.e.}}
\newcommand{\eg}{\emph{e.g.}}
\newcommand{\cf}{\emph{cf.}}
\newcommand{\Com}{\mathbb{C}}
\newcommand{\Real}{\mathbb{R}}
\newcommand{\sgn}{\mathop{\mathrm{sgn}}\nolimits}
\newcommand{\Tr}{\mathop{\mathrm{Tr}}\nolimits}
\newcommand{\Dom}{\mathsf{D}}
\newtheorem{Theorem}{Theorem}
\newtheorem{Corollary}{Corollary}
\newtheorem{Lemma}{Lemma}
\theoremstyle{definition}
\newtheorem{Remark}{Remark}
\newcommand{\eps}{\varepsilon}
\newcommand{\la}{\lambda}
\newcommand{\RR}{{\mathbb{R}}}
\newcommand{\CC}{{\mathbb{C}}}
\newcommand\mydot{\,\cdot\,}
\newcommand{\ov}{\overline}
\newcommand{\ii}{{\rm i}}
\renewcommand{\d}{\mathrm{d}}
\newcommand{\Hm}[1]{\leavevmode{\marginpar{\tiny%
			$\hbox to 0mm{\hspace*{-0.5mm}$\leftarrow$\hss}%
			\vcenter{\vrule depth 0.1mm height 0.1mm width \the\marginparwidth}%
			\hbox to
			0mm{\hss$\rightarrow$\hspace*{-0.5mm}}$\\\relax\raggedright #1}}}
\newcommand\I{\mathrm{i}}
\newcommand{\R}{{\mathbb R}}
\newcommand{\C}{{\mathbb C}}
\newcommand{\N}{{\mathbb N}}
\newcommand\im{\mathrm{Im}\,}	
\newcommand{\e}{{\rm e}}	
\newcommand{\rd}{{\rm d}}
\title[]{Sharp spectral bounds for complex perturbations of the 
	indefinite Laplacian}
\begin{document}



\author{Jean-Claude~Cuenin}
\address{Department of Mathematical Sciences, 
	Loughborough University, Loughborough,
	Leicestershire, LE11 3TU United Kingdom}
\email{J.Cuenin@lboro.ac.uk}
\author{Orif~O.~Ibrogimov}
\address{Institute for Theoretical Physics, ETH Z\"urich,
	Wolfgang-Pauli-Str.~27, 
	8093 Z\"urich, 
	Switzerland}
\email{oibrogimov@phys.ethz.ch}




\date{April 22, 2020}


\begin{abstract}
We derive quantitative bounds for eigenvalues of complex 
perturbations of the indefinite Laplacian on the real line. 
Our results substantially improve existing results even for 
real potentials. 
For $L^1$-potentials, we obtain optimal spectral enclosures 
which accommodate also embedded eigenvalues, while our result 
for $L^p$-potentials yield sharp spectral bounds on the  
imaginary parts of eigenvalues of the perturbed operator
for all $p\in[1,\infty)$. 
The sharpness of the results are demonstrated by means of explicit examples.
\end{abstract}

\maketitle
\section{Introduction and main results}
\noindent
Spectral estimates for non-self-adjoint Schr\"odinger operators 
and related questions have been a very active area of mathematical 
research recently. By now several robust methods have been developed
and the corresponding literature is extensive. To name a few, we mention~\cite{Boegli-CMP-17,Cue-Ken-CPDE17,Dav-Nath02,Enblom-LMP-16,Fan-Kre-Veg-JST18,Frank-BLMS-11,Frank-TAMS-18,Lap-Saf-CMP09,Lee-Seo-JMAA-19,Fra-Lap-Lieb-Sei-LMP06, Safr-BLMS10,Fan-Kre-Veg-JFA-2018, Hen-Kre-JST2017, Cuenin-JFA-17, Krej-Siegl-JFA2019,Ibr-Sta-19,Cuenin-improved-2019,Krej-Siegl-JFA2019,Coss2019,Cue-Lap-Tre-14, Fan-Kre-LMP19, Cuenin-Siegl-LMP-18, Cas-Ibr-Kre-Sta-inprep} and also \cite{Cue-Tre-JMAA16,Dem-Hans-Kat-JFA-09,Dem-Hans-Kat13} for related recent results obtained in an abstract operator theoretic setting.

To goal of this paper is to establish sharp spectral estimates for the operator
\begin{equation}\label{indefSL-intro}
H_V:= \sgn(x)(-\partial^2_x\,+\,V) 
\qquad \mbox{in} \qquad L^2(\RR)\,.
\end{equation}

%
The unperturbed operator~$H_0$ is not
symmetric in the Hilbert space 
$L^2(\RR)$ 
due to the sign change of the weight function; it can be 
interpreted as a self-adjoint operator with respect to the 
Krein space inner product
$(\operatorname{sgn}\cdot,\cdot)$ in $L^2(\RR)$. The sum in \eqref{indefSL-intro} is defined in the form sense (see Section~\ref{Sec:op.def} for details)

Unlike its definite counterpart (i.e.\ without the sign function), the existing literature on  
eigenvalue bounds for operators of type~\eqref{indefSL-intro} 
seems to be much more sparse even in the one-dimensional
setting with real potentials, see \eg~\cite{Beh-Schm-Tru-2019, Beh-Schm-Tru-2017, Beh-Kat-Tru-2009},
and a number of interesting questions remain open.
One of them is a conjecture of Behrndt in \cite{Beh-open-2013}
according to which the eigenvalues of singular indefinite 
Sturm-Liouville operators (\ie~\eqref{indefSL-intro} with 
real-valued~$V$) accumulate to the real axis whenever
the eigenvalues of the corresponding definite Sturm-Liouville 
operator accumulate to the bottom of the essential spectrum from 
below. So far the conjecture of Behrndt has been confirmed only for a particular family of (shifted Coulomb type) potentials in \cite{Lev-Seri-2016}.

The present paper is partially motivated by the recent 
work \cite{Beh-Schm-Tru-2018}, where the following (non optimal) indefinite analogue of the celebrated result of \cite{Abr-Asl-Dav-01} 
was shown: every non-real eigenvalue $\la$ of the singular indefinite 
Sturm-Liouville operator with a real-valued 
potential $V\in L^1(\RR)$ obeys the estimate
\begin{equation}\label{BST.bound}
|\la|^{1/2}\leq 
\|V\|_1\,.
\end{equation}

Our  standing assumption is that 
there exist numbers $a\in(0,1)$ and $b\in\Real$ such that, 
for all $\psi\in H^1(\Real)$,
\begin{equation}\label{rel.bddness}
\int_{\Real} |V||\psi|^2 \leq a\int_{\Real} |\psi'|^2 
+ b\int_{\Real} |\psi|^2\,.
\end{equation}
In particular, potentials $V \in L^p(\Real)$ are covered 
by this hypothesis for all $p\in[1,\infty]$, 
see Section~\ref{Sec:op.def} for more details.
Furthermore, in the same way, 
it is also possible to proceed in a greater generality
and give a meaning to the distributional Dirac delta potential~$\delta$,
which is explicitly solvable.
Putting $\|\delta\|_{L^1(\Real)}:=1$ by convention,
Theorem~\ref{thm:ell1} remains valid in this 
more general setting.
The feature of our results are that we work with complex 
potentials of minimal regularity, our bounds are quantitative 
and substantially improve existing results. Moreover, 
for $L^1$-potentials our spectral enclosures are 
sharp and we cover embedded eigenvalues, too.

Throughout the paper we denote by $\|\cdot\|_p$ the standard 
	$L^p$-norm for $p\in[1,\infty]$.
Our first result quantitatively improves the known 
bound~\eqref{BST.bound} of \cite{Beh-Schm-Tru-2018} and 
holds also for possibly embedded eigenvalues.

\begin{Theorem}\label{thm:ell1}
	For $V\in L^1(\Real)$, every eigenvalue~$\la$ of $H_V$ satisfies
	\begin{equation}\label{encl.ell1}
	\sqrt{2}|\la|\leq \sqrt{|\la|+|\Re(\la)|}\,\|V\|_1\,.
	\end{equation}
\end{Theorem}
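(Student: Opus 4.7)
The plan is to apply a Birman--Schwinger-type argument and reduce the theorem to a sharp pointwise bound on the resolvent kernel of $H_0$. Without loss of generality assume $\im\lambda \geq 0$, the opposite case following by conjugation (since $\|V\|_1 = \|\bar V\|_1$ and $H_V\psi=\lambda\psi$ gives $H_{\bar V}\bar\psi=\bar\lambda\bar\psi$). From the eigenvalue equation $(H_0 - \lambda)\psi = -\sgn(x)V\psi$ one obtains the integral representation
$$\psi(x) = -\int_{\Real} G(x,y;\lambda)\,\sgn(y)\,V(y)\,\psi(y)\,dy,$$
where $G(\cdot,\cdot;\lambda)$ is the integral kernel of $(H_0 - \lambda)^{-1}$. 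Since $\psi$ belongs to the form domain $H^1(\Real) \hookrightarrow L^\infty(\Real)$, this yields $\|\psi\|_\infty \leq \|G\|_\infty\|V\|_1\|\psi\|_\infty$ and hence $\|V\|_1 \geq 1/\|G\|_\infty$, where $\|G\|_\infty := \sup_{x,y \in \Real}|G(x,y;\lambda)|$. Setting $k := \sqrt{\lambda}$ with $\re k, \im k \geq 0$, it therefore suffices to prove
$$\|G\|_\infty \leq \frac{\max\{\re k,\im k\}}{|\lambda|},$$
because the identity $|\lambda| + |\re\lambda| = 2\max\{\re k,\im k\}^2$ converts this into the desired inequality.

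To compute $G$ explicitly I use the fundamental solutions $u_\pm$ of $(H_0 - \lambda)u = 0$ decaying at $\pm\infty$: $u_+(x) = e^{ikx}$ for $x > 0$ and $u_-(x) = e^{kx}$ for $x < 0$, extended to the opposite half-line by matching function value and derivative at zero. Their Wronskian is $W = k(i-1)$, with $|W| = \sqrt{2|\lambda|}$, and the standard jump-discontinuity construction gives
$$G(x,y;\lambda) = -\frac{\sgn(y)}{W}\,u_-(\min\{x,y\})\,u_+(\max\{x,y\}).$$

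The heart of the proof is the pointwise estimate on $|G|$. A quadrant-by-quadrant inspection reduces the supremum to the larger of the suprema on the two same-sign diagonals $\{x = y \geq 0\}$ and $\{x = y \leq 0\}$; on the former, the identity $\tfrac{1-i}{2}e^{2iky} + \tfrac{1+i}{2} = \sqrt{2}\,e^{iky}\cos(ky - \pi/4)$ reduces $u_-(y)u_+(y)$ to a single factor and gives
$$|G(y,y;\lambda)| = \frac{|\cos(ky-\pi/4)|\,e^{-\im k\cdot y}}{|k|},\qquad y \geq 0,$$
with the analogous formula on the negative diagonal (with the roles of $\re k$ and $\im k$ interchanged). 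The supremum over $y$ is controlled in two cases. When $\im k \geq \re k$, the elementary inequality $\sin t \leq \sinh t$ on $t \geq 0$ confines the maximum to $y = 0$, giving $|G| \leq 1/\sqrt{2|\lambda|} \leq \im k/|\lambda|$. When $\im k < \re k$, every interior critical point $y_c$ of $y\mapsto|G(y,y;\lambda)|$ satisfies the stationarity condition $\re k\cos(2\re k\,y_c) - \im k\sin(2\re k\,y_c) = \im k\,e^{-2\im k\,y_c}$; substituting this relation back shows that $|G(y_c,y_c;\lambda)| = |\cos(2\re k\,y_c)|/(2\im k)$, and a quadratic inequality in $\xi := e^{-2\im k\,y_c}$, whose discriminant factors as $4((\re k)^2 - (\im k)^2)^2$, yields $|\cos(2\re k\,y_c)| \leq 2(\re k)(\im k)/|\lambda|$, hence $|G(y_c,y_c;\lambda)| \leq \re k/|\lambda|$.

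The main obstacle is this sharp estimate in the regime $\im k < \re k$: a naive bound would give only $|G| \leq 1/|k|$ and thus the weaker conclusion $\|V\|_1 \geq |\lambda|^{1/2}$, while the correct constant requires the algebraic reduction of the stationarity relation to a quadratic with perfect-square discriminant. Embedded eigenvalues ($\im\lambda = 0$, $\re\lambda > 0$) are included, since the Green's function formula extends continuously in the limit $\im\lambda \downarrow 0$ and the integral representation for $\psi$ remains valid as $\psi \in L^\infty$.
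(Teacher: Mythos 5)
Your overall strategy --- bound the Birman--Schwinger quantity by a sharp pointwise estimate on the Green's function and apply a fixed-point inequality --- is the same as the paper's, but you realize it differently. The paper estimates the Hilbert--Schmidt norm of $K_\lambda=|V|^{1/2}(H_0-\la)^{-1}V_{1/2}$ using $|G_\la(x,y)|^2 \le (|\la|+|\Re\la|)/(2|\la|^2)$ (Lemma~\ref{Lem.Resv.estim}), whereas you work with the integral equation $\psi=-(H_0-\la)^{-1}\sgn V\psi$ directly on $L^\infty$ and estimate $\|\psi\|_\infty\le\|G\|_\infty\|V\|_1\|\psi\|_\infty$. These are two faces of the same Schur-type bound, both keyed to the identity $|\la|+|\Re\la|=2\max\{\Re\sqrt\la,\Im\sqrt\la\}^2$; your $L^\infty$ route is a bit more elementary in that it bypasses the $V_{1/2}$ factorization, but it buys nothing essential over the paper's HS computation. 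Your verification of the kernel bound on the diagonal (the reduction to $\varphi(y)=1+\e^{-4by}+2\e^{-2by}\sin(2ay)$ and the critical-point identity $\varphi(y_0)=(1+a^2/b^2)\cos^2(2ay_0)$) is the same analysis as the paper's; the case $\Im k\ge\Re k$ is correct via $\sin(2ay)\le 2ay\le 2by\le\sinh(2by)$, and the case $\Im k<\Re k$ lands on the same bound $|\cos(2ay_c)|\le 2ab/(a^2+b^2)$. (One small slip: the quadratic you solve for $\xi=\e^{-2by_c}$ does not have discriminant $4(a^2-b^2)^2$; eliminating the sine instead yields $c\,(a^2+b^2)c - 2ab\,c\le0$ and hence $c\le 2ab/(a^2+b^2)$ directly. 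The conclusion you state is nevertheless correct.)

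The genuine gap is the treatment of embedded eigenvalues. You dismiss the case $\Im\la=0$, $\Re\la>0$ with ``the integral representation for $\psi$ remains valid as $\psi\in L^\infty$,'' but that is precisely what has to be proved, and it is not obvious. For $\la\in\CC^+$ the representation $\psi=-(H_0-\la)^{-1}\sgn V\psi$ follows because the homogeneous equation $(H_0-\la)u=0$ has no nonzero bounded solution, so the difference of $\psi$ and the kernel integral must vanish. For real $\la>0$, however, $(H_0-\la)u=0$ does have a one-parameter family of bounded solutions (the outgoing wave $u(x)=c\,\e^{\sqrt\la x}\mathbf{1}_{x<0}+c(\alpha\e^{\I\sqrt\la x}+\overline\alpha\e^{-\I\sqrt\la x})\mathbf{1}_{x>0}$), and one cannot rule out a priori that $\psi$ and the kernel integral $\tilde\psi$ differ by such a wave --- indeed $\tilde\psi$ need not even lie in $L^2$ unless a certain moment of $V\psi$ vanishes. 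Passing to $\la+\I\eps$ introduces the extra term $\I\eps(H_0-\la-\I\eps)^{-1}\psi$, whose $L^2$-norm is merely $O(1)$ by the trivial resolvent bound \eqref{L2 resolvent bound H0}, so it is not evident that it disappears in the limit. This is exactly why the paper proves the one-sided Birman--Schwinger principle (Lemma~\ref{Lem.BS}), whose proof hinges on the nontrivial estimate $\|M_\eps\|=O(\eps^{-3/4})$. Your proof needs an argument of comparable strength (or an explicit citation to one) before the embedded case is covered; as written, it proves the theorem only for $\la\in\CC\setminus\RR$, together with the trivial case $\la=0$.
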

%
\begin{Remark}
	\begin{enumerate}[\upshape(i)]
		\item The bound in \eqref{encl.ell1} is sharp in the sense 
		that, for any $Q>0$ and any point~$\la\in\CC\setminus\RR$ which 
		fulfills the equation
		\[
		2|\la|^2=\bigl(|\la|+|\Re(\la)|\bigr)\,Q^{2}\,,
		\]
		there exists $V\in L^1(\Real)$ such that $Q=\|V\|_1$ 
		and~$\lambda$ is an eigenvalue of the corresponding 
		operator~$H_{V}$, see Section~\ref{Sect.optim}. This means that 
		every boundary point of the spectral enclosure corresponding 
		to~\eqref{encl.ell1}, with the exception of points located 
		in the real line, is an eigenvalue of~$H_V$ for some 
		$L^1$--potential~$V$. Hence the obtained spectral enclosure 
		cannot be squeezed any further. 
		\item It readily follows from Theorem~\ref{thm:ell1} that the intervals
		$(-\infty, \|V\|_1^2)$ and $(\|V\|_1^2, \infty)$ are free of 
		embedded eigenvalues of $H_V$ for $V\in L^1(\Real)$.
		\item In view of the elementary inequality $|\Re(\la)|\leq|\la|$, 
		it is obvious that the bound~\eqref{encl.ell1} provides a strictly tighter 
		spectral enclosure than \eqref{BST.bound}. 
	\end{enumerate}
\end{Remark}
\begin{Corollary}\label{Cor:ineq.imag.L1}
	For $V\in L^1(\Real)$, every eigenvalue~$\la$ of $H_V$ satisfies
	the sharp bound
	\begin{equation}\label{sharp.ineq.L1.imag}
	|\Im(\la)|\leq\frac{3\sqrt{3}}{8} \,\|V\|_1^2\,.	
	\end{equation}
\end{Corollary}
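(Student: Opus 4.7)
The plan is to treat the corollary as a one-variable optimization derived from Theorem~\ref{thm:ell1}. Writing $Q=\|V\|_1$, $\alpha=\Re(\la)$, $\beta=\Im(\la)$ and $r=|\la|=\sqrt{\alpha^2+\beta^2}$, the theorem (after squaring) reads
\[
2r^{2}\leq (r+|\alpha|)\,Q^{2}.
\]
My goal is to bound $|\beta|^{2}=r^{2}-\alpha^{2}=(r-|\alpha|)(r+|\alpha|)$ in terms of $Q$ alone, so the natural move is to eliminate either $r$ or $|\alpha|$ and optimize over the remaining variable.

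The cleanest parametrization is $s:=|\alpha|/r\in[0,1]$. The constraint becomes $r\leq \tfrac{(1+s)Q^{2}}{2}$, while
\[
|\beta|^{2}=r^{2}(1-s^{2})\leq \frac{(1+s)^{2}Q^{4}}{4}(1-s^{2})
=\frac{Q^{4}}{4}\,(1+s)^{3}(1-s).
\]
Thus the corollary reduces to maximizing $f(s):=(1+s)^{3}(1-s)$ on $[0,1]$. A single derivative, $f'(s)=(1+s)^{2}(2-4s)$, locates the unique interior critical point at $s=1/2$, giving $f(1/2)=(3/2)^{3}\cdot(1/2)=27/16$. Hence $|\beta|^{2}\leq \tfrac{27}{64}Q^{4}$, which is precisely \eqref{sharp.ineq.L1.imag} after taking square roots.

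There is no real obstacle here: the only nontrivial observation is that the right-hand side of \eqref{encl.ell1} depends on $\la$ only through $|\la|$ and $|\Re(\la)|$, which makes the reduction to the scalar function $f(s)$ immediate. Sharpness of the bound follows from sharpness of Theorem~\ref{thm:ell1}: applying the construction announced in Section~\ref{Sect.optim} to a $\la$ with $|\Re(\la)|/|\la|=1/2$ and $|\la|=3Q^{2}/4$ produces an eigenvalue saturating \eqref{sharp.ineq.L1.imag}, which justifies the word ``sharp'' in the statement.
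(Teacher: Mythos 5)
Your proof is correct, and it takes a somewhat different route from the paper's. The paper proves Corollaries~\ref{Cor:ineq.imag.L1} and~\ref{Cor:ineq.imag} simultaneously: after rescaling $x=\Re(\la)$ and $y=\Im(\la)$ by $\|V\|_p^{2p/(2p-1)}$, it reduces the constraint to $2^{3-2p}s^{2p-1}\leq f(s/t)$ with $f(x)=\tfrac{x}{1+x^2}+\tfrac{x}{\sqrt{1+x^2}}$, and then maximizes $f$ via the substitution $x=\tan\alpha$, finding $\max f=\tfrac{3\sqrt{3}}{4}$ at $\alpha=\pi/3$. You instead parametrize by $s=|\Re(\la)|/|\la|\in[0,1]$ (the cosine of the argument rather than its tangent), eliminate $r=|\la|$ using the constraint $r\leq\tfrac{(1+s)Q^2}{2}$, and reduce to maximizing the polynomial $(1+s)^3(1-s)$ by a single elementary derivative. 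Both detect the same extremal angle ($|\Re(\la)|/|\la|=1/2$, equivalently $|\Im(\la)|/|\Re(\la)|=\sqrt{3}$) and the same constant $\tfrac{3\sqrt{3}}{8}$. Your version is arguably cleaner for $p=1$ since it avoids the trigonometric detour and works directly with a polynomial; the paper's version has the advantage that the same function $f$ serves all $p\in[1,\infty)$ at once, whereas with your parametrization one would have to redo the algebra for general $p$ (it does go through, reducing again to the square root of $(1+s)^3(1-s)$, but the paper sets it up so that this is manifest). Your final remark on sharpness --- that $s=1/2$, $|\la|=3Q^2/4$ lies on the boundary curve and the Section~\ref{Sect.optim} construction then produces a saturating delta-potential --- is also correct and matches the paper's implicit justification of the word ``sharp.''
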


The spectral enclosure corresponding to \eqref{encl.ell1} 
is a compact set, which is symmetric with respect to both the real 
and the imaginary axes. The geometry of its boundary is 
quite easy to understand. We refer to Figure~\ref{fig:1}
for the plots of the boundary curves corresponding 
to~\eqref{BST.bound} and \eqref{encl.ell1} with $\|V\|_1=1$.

\begin{figure}[htb!]
	\centering
	\begin{subfigure}[c]{0.49\textwidth}
		\includegraphics[width=\textwidth, height=\textwidth]{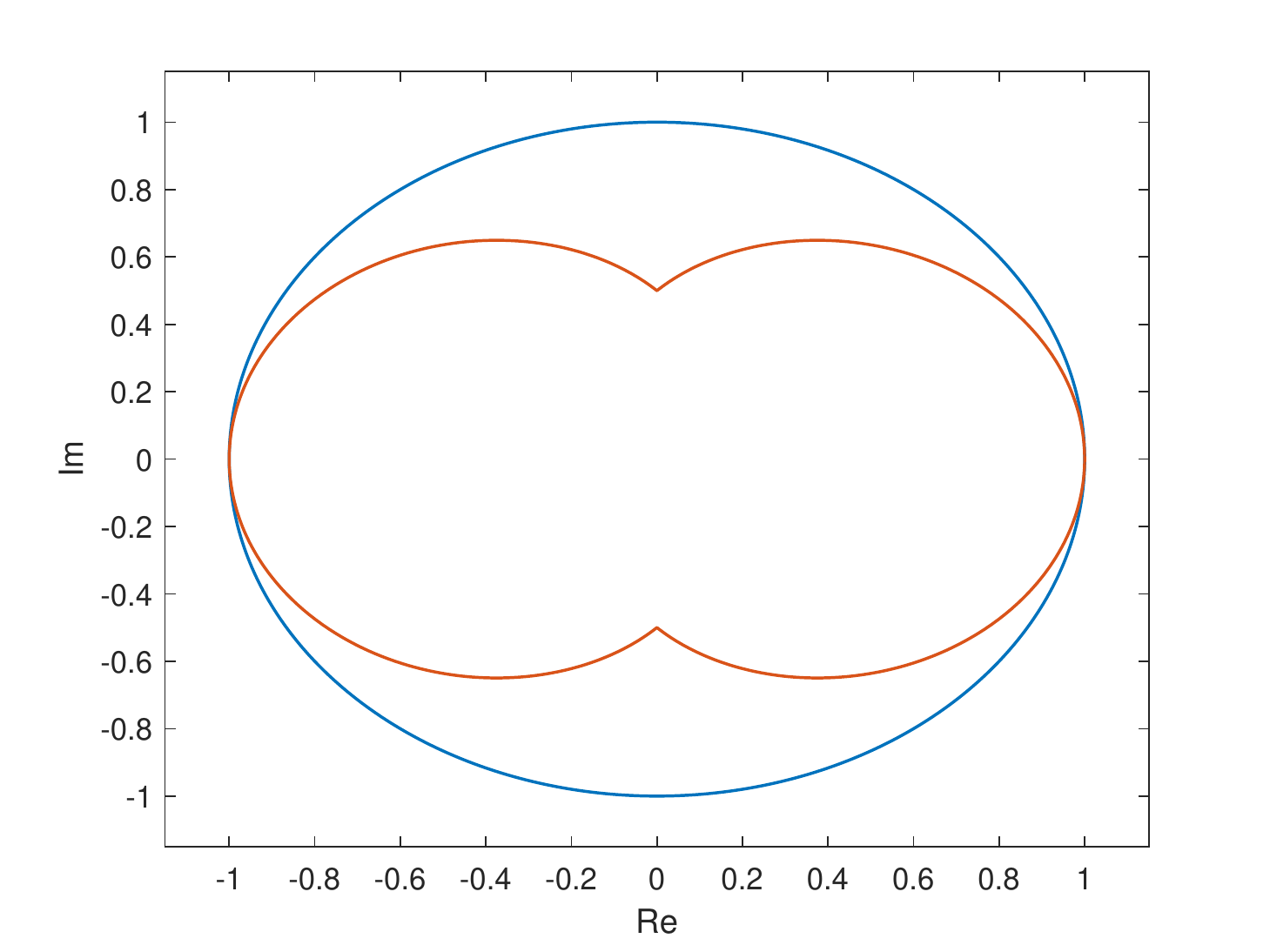}
	\end{subfigure}
	\begin{subfigure}[c]{0.49\textwidth}
		\includegraphics[width=\textwidth, height=\textwidth]{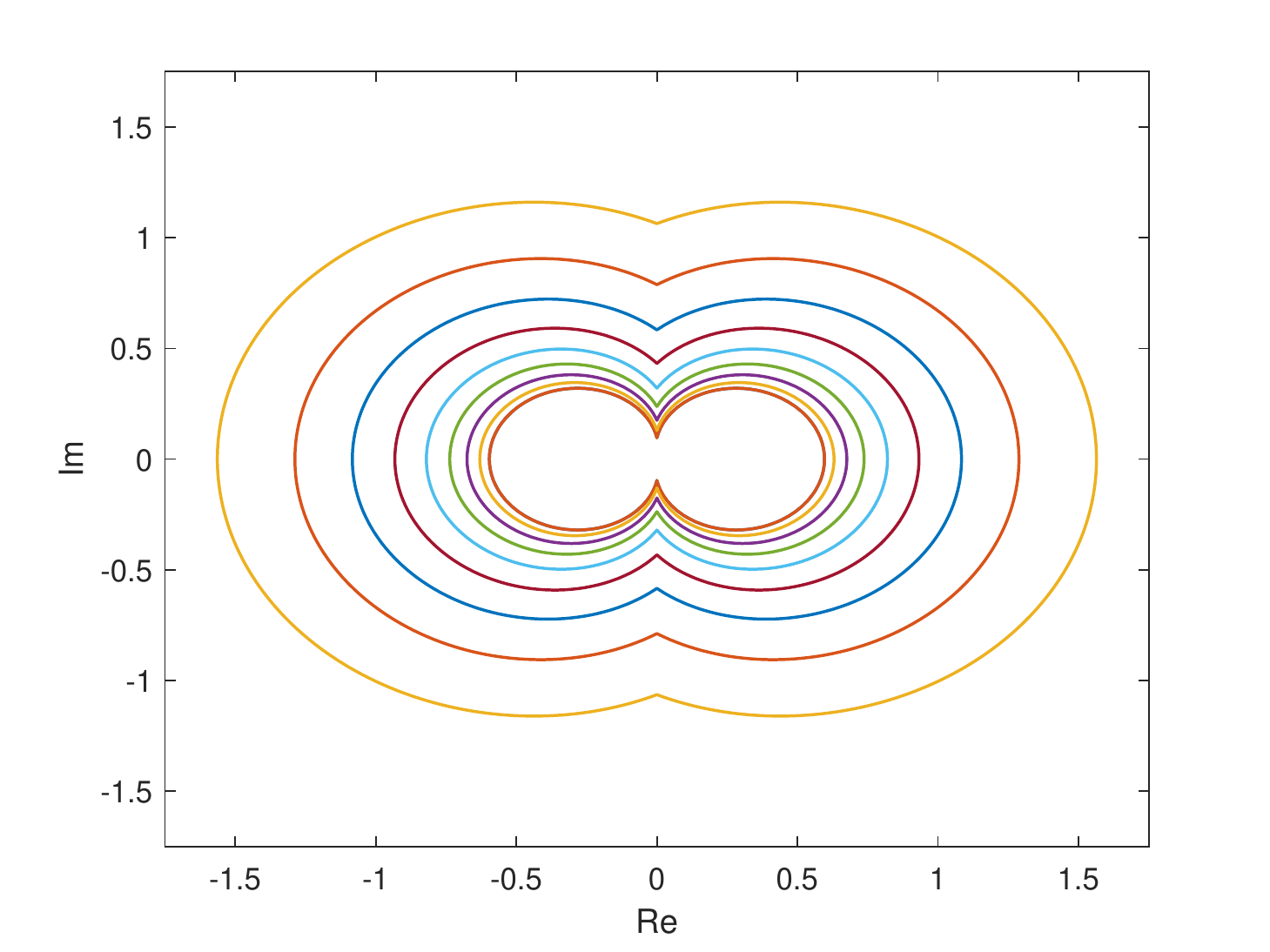}
	\end{subfigure}
	\caption{\small{The blue circle and the red curve on the left 
			correspond to the boundary curves of the spectral enclosures 
			respectively given by~\eqref{BST.bound} and~\eqref{encl.ell1} 
			for~$\|V\|_{L^1(\RR)}=1$.
			The picture on the right illustrates plots of the expanding boundary 
			curves corresponding to the spectral 
			enclosure~\eqref{encl.ell1} for 
			$\|V\|_1=(1.35)^j/7$, $j=1,2,\ldots,9$.}}
	\label{fig:1}
\end{figure}

Our next result provides a spectral estimate in terms of the 
$L^p$-norm of the potential for $p\in(1,\infty)$. The proof interpolates the bound of Theorem \ref{thm:ell1} with the standard operator norm bound for the resolvent of a self-adjoint operator. The strategy was first used in \cite{Frank-TAMS-18}.

\begin{Theorem}\label{thm:p-norm}
	Let $p\in(1,\infty)$. For $V\in L^p(\RR)$, every eigenvalue~$\la$ 
	of $H_V$ satisfies
	\begin{equation}\label{encl.ellp}
	2^{\frac{3}{2p}-1}|\la|^{\frac{1}{p}}|\Im(\la)|^{1-\frac{1}{p}} \leq 
	\bigl(|\la|+|\Re(\la)|\bigr)^{\frac{1}{2p}}\|V\|_p\,.
	\end{equation}
\end{Theorem}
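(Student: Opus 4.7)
The plan is to combine the Birman-Schwinger principle with H\"older's inequality and Riesz-Thorin interpolation of the off-diagonal mapping norms of the resolvent $(H_0-\la)^{-1}$, following the strategy of \cite{Frank-TAMS-18}. Rewriting the eigenvalue equation $H_V\psi=\la\psi$ as $(H_0-\la)\psi=-\sgn(x)V\psi$ and symmetrizing via $\phi:=|V|^{1/2}\psi$, the Birman-Schwinger principle asserts that $-1$ is an eigenvalue of
\[
K(\la) := |V|^{1/2}(H_0-\la)^{-1}\,\sgn(x)\,\sgn(V)\,|V|^{1/2},
\]
so that $\|K(\la)\|_{L^2\to L^2}\geq 1$. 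A double application of H\"older's inequality, based on $\||V|^{1/2}\|_{L^{2p}}=\|V\|_p^{1/2}$ and the splitting $\tfrac12=\tfrac1{2p}+\tfrac{p-1}{2p}$, then gives
\[
\|K(\la)\|_{L^2\to L^2}\;\leq\;\|V\|_p\,\bigl\|(H_0-\la)^{-1}\bigr\|_{L^{2p/(p+1)}\to L^{2p/(p-1)}},
\]
which reduces matters to an off-diagonal mapping bound on the free resolvent.

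I would bound the off-diagonal resolvent norm by Riesz-Thorin interpolation at parameter $\theta=1-1/p$ between two endpoints. The $L^1\to L^\infty$ endpoint is the supremum of the Green's function of $H_0$, and the pointwise kernel bound $|G(x,y;\la)|\leq\sqrt{|\la|+|\Re\la|}/(\sqrt{2}|\la|)$ established in the proof of Theorem~\ref{thm:ell1} yields
\[
\bigl\|(H_0-\la)^{-1}\bigr\|_{L^1\to L^\infty}\;\leq\;\frac{\sqrt{|\la|+|\Re\la|}}{\sqrt{2}\,|\la|}.
\]
For the $L^2\to L^2$ endpoint I would establish the bound $\|(H_0-\la)^{-1}\|_{L^2\to L^2}\leq 2/|\Im\la|$ via the factorization $(H_0-\la)^{-1}=(-\partial_x^2-\la J)^{-1}J$ with $J=\sgn(x)$, exploiting the $J$-self-adjointness of $H_0$ in the Krein space $\bigl(L^2(\RR),(\sgn(x)\,\cdot\,,\,\cdot\,)\bigr)$. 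Riesz-Thorin interpolation of these two bounds, substitution into $\|K(\la)\|_{L^2\to L^2}\geq 1$, and elementary rearrangement then produces \eqref{encl.ellp} with exactly the constant $2^{3/(2p)-1}$ on the left-hand side.

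The main obstacle is the $L^2\to L^2$ resolvent bound: since $H_0$ is non-self-adjoint on $L^2(\RR)$, the standard numerical-range inequality $\|(A-\la)^{-1}\|\leq 1/|\Im\la|$ for a self-adjoint $A$ does not apply directly, and one must appeal to the underlying Krein-space symmetry (or perform a direct analysis of the Green's function in the complex plane) to recover a bound of the correct shape $C/|\Im\la|$. Once this endpoint is secured, the remaining H\"older and Riesz-Thorin computations are routine, and the arithmetic of the exponents $\tfrac1{2p}$, $\tfrac1p$ and $1-\tfrac1p$ automatically produces the sharp constant $2^{3/(2p)-1}$ featured in~\eqref{encl.ellp}.
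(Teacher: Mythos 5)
Your overall strategy---reduce the Birman--Schwinger operator norm to an off-diagonal mapping norm of $(H_0-\la)^{-1}$ via H\"older, then interpolate by Riesz--Thorin between the $L^1\to L^\infty$ kernel bound and the $L^2\to L^2$ resolvent bound---is a correct and genuinely different route from the paper's. The paper instead applies Stein's complex interpolation theorem to the analytic family $T_z=|V|^{zp/2}(H_0-\la)^{-1}|V|^{zp/2}$, reading off $\|K_\la\|=\|T_{1/p}\|$. The two methods are equivalent here: with the same two endpoints ($L^1\to L^\infty$ supremum of $G_\la$, and $\|(H_0-\la)^{-1}\|_{L^2\to L^2}\leq 2/|\Im\la|$), the interpolation parameter $\theta=1-1/p$ and the exponent bookkeeping produce exactly $2^{3/(2p)-1}$, as you computed. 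Your route is arguably more elementary in that it avoids verifying the analytic-family, uniform-boundedness, and strip-continuity hypotheses of Stein's theorem; the paper's route avoids the explicit H\"older factorization of $|V|^{1/2}$ across $\tfrac12=\tfrac1{2p}+\tfrac{p-1}{2p}$.

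One caveat: your proposed derivation of the $L^2\to L^2$ endpoint is not sound as stated. You suggest obtaining $\|(H_0-\la)^{-1}\|\leq 2/|\Im\la|$ from the factorization $(H_0-\la)^{-1}=(-\partial_x^2-\la J)^{-1}J$ and the $J$-self-adjointness of $H_0$ in the Krein space. But $J$-self-adjointness (equivalently, Krein-space self-adjointness) does \emph{not} imply a resolvent bound of the form $1/\dist(\la,\R)$; indeed the numerical range of $-\partial_x^2-\la J$ contains $0$ (take $s=\langle\psi,J\psi\rangle\to 0$), so no numerical-range argument applies, and Krein-space self-adjoint operators can have non-real spectrum. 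The paper establishes the bound by a concrete Krein-type resolvent formula $(H_0-\la)^{-1}=(B_0-\la)^{-1}-F_\la$, where $B_0$ is the genuinely self-adjoint Dirichlet decoupling at the origin and $F_\la$ is an explicit rank-one operator whose norm is computed to be $\leq 1/|\Im\la|$; the triangle inequality then gives $2/|\Im\la|$. Your fallback suggestion of a direct Green's-function analysis could also work (e.g.\ via a Schur test with exponential weights), but as written the Krein-space heuristic leaves this key endpoint unjustified. Once that bound is supplied by the rank-one argument, the rest of your proof goes through and yields \eqref{encl.ellp} with the stated constant.
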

\begin{Remark}
	\begin{enumerate}[\upshape (i)]
		\item The bound \eqref{encl.ellp} trivially holds 
		for embedded eigenvalues. 
		Letting $p\searrow1$~in~\eqref{encl.ellp}, one arrives 
		at the bound \eqref{encl.ell1}. An analogous version of
		the bound \eqref{encl.ellp} for the case $p=\infty$ is 
		very easy to derive and reads as 
		\[
		|\Im(\la)|\leq 2\|V\|_{\infty}\,.
		\] 
		\item Using the elementary inequalities 
		$|\Re(\la)|\leq|\la|$ and $|\Im(\la)|\leq|\la|$, 
		one can easily read off from \eqref{encl.ellp} that
		the following rough bound holds
		%
		%
		\begin{equation}\label{bound.imag.Lp}
		|\Im(\la)|\leq 2^{\frac{2p-2}{2p-1}}\|V\|^{\frac{2p}{2p-1}}_p\,.
		\end{equation}
		This improves the recent result in \cite{Phil-2019}
		where, for real-valued $V\in L^p(\RR)$ with $p\geq 2$, the estimate
		\eqref{bound.imag.Lp} was shown to hold with a constant 
		factor $C(p)$ such that $C(p)>2^{\frac{2p-2}{2p-1}}$.
		While the result of 
		\cite{Phil-2019} is a considerable improvement of 
		the analogous result of \cite{Beh-Schm-Tru-2019}, 
		the function $C(p)$ therein is a strictly decreasing 
		function of the parameter~$p$ with the limit equal to 
		$2+\sqrt{2}$ at infinity.
	\end{enumerate}
\end{Remark}
As a matter of fact, by an optimization trick under the 
constraint \eqref{encl.ellp}, one can substantially improve 
the bound \eqref{bound.imag.Lp}, thus generalizing the 
sharp bound \eqref{sharp.ineq.L1.imag} for $L^p$-potentials 
for all $p\in(1,\infty)$.
\begin{Corollary}\label{Cor:ineq.imag}
	Let $p\in(1,\infty]$. For $V\in L^p(\RR)$, every eigenvalue~$\la$ 
	of $H_V$ satisfies the bound
	\begin{equation}\label{ineq.imag}
	|\Im(\la)|\leq 2\Bigl(\frac{3\sqrt{3}}{16}\Bigr)^{\frac{1}{2p-1}}
	\|V\|^{\frac{2p}{2p-1}}_p\,.
	\end{equation}	
\end{Corollary}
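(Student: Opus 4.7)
The strategy is to treat the bound of Theorem~\ref{thm:p-norm} as a constraint on the pair $(|\Re(\la)|,|\Im(\la)|)$ and to optimise away the dependence on $|\Re(\la)|$. The case $p=\infty$ is the bound $|\Im(\la)|\le 2\|V\|_\infty$ noted after Theorem~\ref{thm:p-norm}, which coincides with \eqref{ineq.imag} in the limit $p\to\infty$; so I focus on $p\in(1,\infty)$ and set $u:=|\Re(\la)|$, $v:=|\Im(\la)|$.

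Raising \eqref{encl.ellp} to the $2p$-th power gives $2^{3-2p}(u^2+v^2)v^{2p-2}\le(\sqrt{u^2+v^2}+u)\|V\|_p^{2p}$, and rationalising the denominator via
\[
\frac{u^2+v^2}{\sqrt{u^2+v^2}+u}=\frac{(u^2+v^2)(\sqrt{u^2+v^2}-u)}{v^2}
\]
rewrites this as $(u^2+v^2)(\sqrt{u^2+v^2}-u)\le 2^{2p-3}v^{4-2p}\|V\|_p^{2p}$. An eigenvalue with imaginary part $v>0$ can exist only when this holds for \emph{some} $u\ge 0$, so the extremal $v$ is governed by the infimum of the left-hand side over $u\ge 0$.

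Rescaling $u=vt$ turns the left-hand side into $v^{3}g(t)$ with $g(t):=(t^2+1)(\sqrt{t^2+1}-t)$, reducing the problem to minimising $g$ on $[0,\infty)$. For this I would use the substitution $s=\sqrt{t^2+1}+t\in[1,\infty)$, under which $\sqrt{t^2+1}-t=1/s$ and $t^2+1=(s+s^{-1})^2/4$, so $g=(s+s^{-1})^2/(4s)=s/4+1/(2s)+1/(4s^3)$. Its derivative vanishes where $s^4-2s^2-3=(s^2-3)(s^2+1)=0$, giving the unique critical point $s=\sqrt{3}$ and minimum $\min_{t\ge 0}g(t)=4\sqrt{3}/9$.

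Inserting this value back yields $v^{2p-1}\le\bigl(9/(4\sqrt{3})\bigr)\,2^{2p-3}\|V\|_p^{2p}=(3\sqrt{3}/16)\,2^{2p-1}\|V\|_p^{2p}$, and taking $(2p-1)$-th roots produces exactly \eqref{ineq.imag}. The only nontrivial step is the single-variable optimisation leading to the constant $4\sqrt{3}/9$; everything else is algebraic bookkeeping, with agreement with the $p=1$ case of Corollary~\ref{Cor:ineq.imag.L1} and the $p\to\infty$ limit serving as consistency checks.
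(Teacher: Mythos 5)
Your proof is correct and follows essentially the paper's route: reduce \eqref{encl.ellp} to the constraint \eqref{constr.20} on $(|\Re(\la)|,|\Im(\la)|)$ and then optimize over the ratio of real to imaginary part. Your rationalization-plus-minimization of $g(t)=(t^2+1)\bigl(\sqrt{t^2+1}-t\bigr)$ via $s=\sqrt{t^2+1}+t$ is precisely the reciprocal of the paper's maximization of $f(x)=\tfrac{x}{1+x^2}+\tfrac{x}{\sqrt{1+x^2}}$ via $x=\tan\alpha$ (one checks $f(1/t)\,g(t)\equiv 1$, and both extrema occur at the same point and yield the constant $\tfrac{3\sqrt3}{4}$), so the two computations are a minor reparametrization of the same single-variable problem.
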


Below we provide various plots of the expanding boundary 
curves corresponding to the spectral enclosure from 
Theorem~\ref{thm:p-norm}. 
\begin{figure}[htb!]
	\centering
	\includegraphics[width=0.65\textwidth]{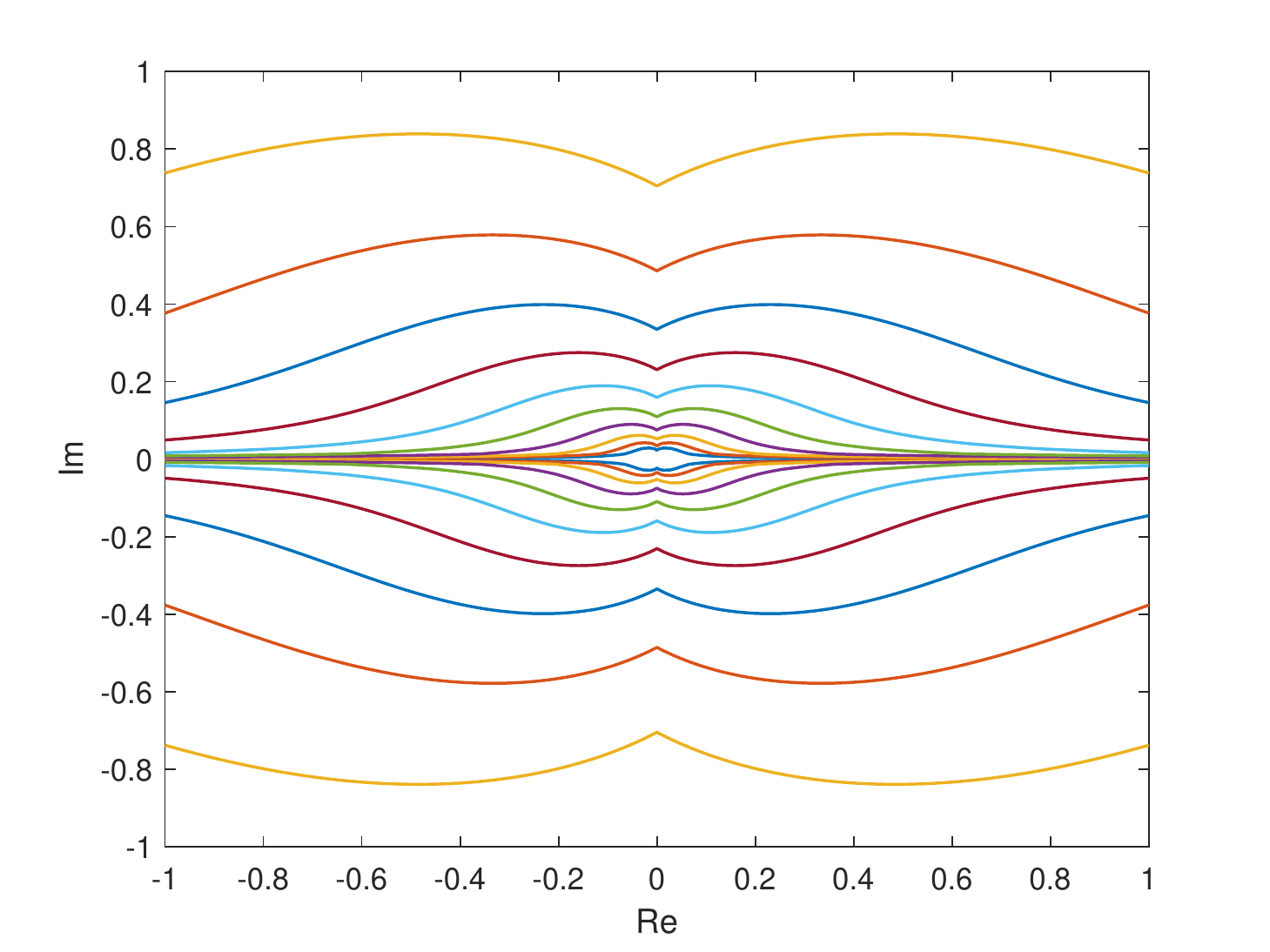}
	\caption{The plots of the expanding boundary curves 
		corresponding to the spectral enclosure from 
		Theorem~\ref{thm:p-norm} for $p=1.25$ and 
		$\|V\|_p=(1.25)^j/10$, $j=1,2,\dots,10$.}
	\label{fig:sp_thm2}
\end{figure}

The next result concerns with spectral bounds for 
$L^p$-potentials again. 
This time, however, it is given in terms of 
the $L^p$-sizes of the restrictions to the
intervals $(0,\infty)$ and $(-\infty, 0)$ 
of the potential.
In the sequel, for $p\in(1,\infty)$, we denote 
by $\|V\|_{p,\pm}$ the standard $L^p$-norms over 
the intervals $(0,\infty)$ and $(-\infty, 0)$, respectively.
\begin{Theorem}\label{thm:p-norm.1}
	Let $p\in(1, \infty)$. For $V\in L^p(\RR)$, every non-real eigenvalue of $H_V$ satisfies
	\begin{equation}\label{spec.ineq.main}
	\sqrt{2}\,\Bigl(\frac{p}{p-1}\Bigr)^{1-\frac{1}{p}}\sqrt{|\la|} \leq 
	\max\Bigg\{\frac{\|V\|_{p,-}}{\bigl|\Re(\sqrt{\la})\bigr|^{1-\frac{1}{p}}}
	+\frac{\sqrt{2}\|V\|_{p,+}}{\bigl|\Im(\sqrt{\la})\bigr|^{1-\frac{1}{p}}}\,,
	\;\, \frac{\sqrt{2}\|V\|_{p,-}}{\bigl|\Re(\sqrt{\la})\bigr|^{1-\frac{1}{p}}}
	+\frac{\|V\|_{p,+}}{\bigl|\Im(\sqrt{\la})\bigr|^{1-\frac{1}{p}}}\Bigg\}\,.
	\end{equation} 
\end{Theorem}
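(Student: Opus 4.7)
My plan is to reduce the eigenvalue problem to two half-line problems joined by matching at $x=0$, and extract from the matching a single scalar identity whose H\"older estimation yields the claim. Set $k=\sqrt{\lambda}$ in the open first quadrant (so $\Re k,\Im k>0$). The eigenvalue equation $H_V\psi=\lambda\psi$ decouples as $-\psi''+V\psi=\lambda\psi$ on $(0,\infty)$ and $-\psi''+V\psi=-\lambda\psi$ on $(-\infty,0)$, with $\psi,\psi'$ continuous at $0$ and $\psi\in L^2(\Real)$. Testing each half-line equation against the corresponding decaying free exponential ($e^{ikz}$ on the right, $e^{kz}$ on the left) and integrating by parts twice, the boundary terms at $\pm\infty$ vanish because $\psi,\psi'$ decay at the Jost rate, yielding the outgoing-boundary identities
\begin{align*}
\psi'(0)-ik\psi(0) &= -\int_{0}^{\infty}e^{ikz}V(z)\psi(z)\,dz,\\
\psi'(0)-k\psi(0) &= \int_{-\infty}^{0}e^{kz}V(z)\psi(z)\,dz.
\end{align*}
Subtracting the two relations eliminates $\psi'(0)$ and, since $|1-i|=\sqrt{2}$, yields the master inequality
\[
\sqrt{2}\,|k|\,|\psi(0)| \leq \Bigl|\int_{0}^{\infty}e^{ikz}V\psi\Bigr| + \Bigl|\int_{-\infty}^{0}e^{kz}V\psi\Bigr|.
\]

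For each integral I would invoke H\"older's inequality in the form $\bigl|\!\int\! fgh\bigr|\leq \|fg\|_{p}\|h\|_{q}$ with $f=e^{ikz}$ (or $e^{kz}$), $g=V$, $h=\psi$, and $q=p/(p-1)$; since $|e^{ikz}|\leq 1$ on $(0,\infty)$ and $|e^{kz}|\leq 1$ on $(-\infty,0)$, this gives $\|fg\|_p\leq \|V\|_{p,+}$ (resp.\ $\|V\|_{p,-}$). The remaining $L^q$-norms of $\psi$ are controlled by the Jost decay: writing $\psi_+(z)=e^{ikz}m_+(z)$ with $m_+$ bounded on $[0,\infty)$, one computes $\|\psi_+\|_{L^q(0,\infty)}\leq\|m_+\|_{\infty}(q|\Im k|)^{-1/q}$ and analogously $\|\psi_-\|_{L^q(-\infty,0)}\leq\|m_-\|_{\infty}(q|\Re k|)^{-1/q}$. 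Moving the universal factor $q^{-1/q}=(p/(p-1))^{-(p-1)/p}$ to the left reproduces the prefactor $(p/(p-1))^{1-1/p}\sqrt{|\lambda|}$ on the left-hand side of~\eqref{spec.ineq.main}, and dividing by $|\psi(0)|$ isolates the right-hand side once $\|m_\pm\|_\infty$ has been written as a constant multiple of $|\psi(0)|$.

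The delicate step is the bookkeeping of the constants $c_\pm:=\|m_\pm\|_\infty/|\psi(0)|$, which drives the $\max$-structure. The outgoing-boundary identities give $\psi'(0)$ in two equivalent forms, $\psi'(0)=ik\psi(0)-R_+=k\psi(0)+R_-$, so when one applies the Volterra/fixed-point control of $m_\pm$ one may substitute either expression into the Jost recurrence. Each choice produces a constant pair $(c_-,c_+)$ equal to either $(1,\sqrt{2})$ or $(\sqrt{2},1)$, with the $\sqrt{2}$ arising from $|1-i|$ when one outgoing identity is inserted into the other; the two admissible pairings generate the two arguments of the maximum in~\eqref{spec.ineq.main}. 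Since only one of the two pairings applies for a given eigenfunction, the maximum of the two resulting bounds is the sharp universal statement. The main technical obstacle is obtaining Volterra/fixed-point bounds on $m_\pm$ without a smallness hypothesis on $V$; this exploits crucially that $\psi$ is an honest $L^2(\Real)$-eigenfunction, ruling out any blow-up of the normalized multipliers $m_\pm$ along the respective half-line.
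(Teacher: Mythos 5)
Your master inequality is correct and elegant: testing $-\psi''+V\psi=\lambda\psi$ on $(0,\infty)$ against $e^{\I kz}$ and $-\psi''+V\psi=-\lambda\psi$ on $(-\infty,0)$ against $e^{kz}$, with $k=\sqrt{\lambda}$, does yield $\psi'(0)-\I k\psi(0)=-\int_0^\infty e^{\I kz}V\psi$, $\psi'(0)-k\psi(0)=\int_{-\infty}^0 e^{kz}V\psi$, and hence $\sqrt{2}\,|k||\psi(0)|\leq\bigl|\int_0^\infty e^{\I kz}V\psi\bigr|+\bigl|\int_{-\infty}^0 e^{kz}V\psi\bigr|$. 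The H\"older step and the factor $q^{-1/q}=(p/(p-1))^{-(1-1/p)}$ are also exactly right and match the prefactor in \eqref{spec.ineq.main}. This is a genuinely different route from the paper, which never touches the eigenfunction: it instead bounds the Birman--Schwinger operator $K_\la$ by a Schur test with the weight $\rho(x,y)=|V(x)|^{1/2}|V(y)|^{-1/2}$, reducing everything to $\sup_x\int|G_\la(x,y)||V(y)|\,\rd y$; the $\max$ in the theorem then falls out naturally from taking $\sup_{x\geq0}$ versus $\sup_{x\leq0}$ of that integral.

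The decisive gap is the claim $\|m_\pm\|_\infty=c_\pm|\psi(0)|$ with universal constants $c_\pm\in\{1,\sqrt{2}\}$. Writing $\psi=Cf_+$ with $f_+$ the Jost solution normalized by $f_+(x,k)\sim e^{\I kx}$ as $x\to+\infty$ and $f_+(x,k)=e^{\I kx}m_+(x,k)$, one has $C=\psi(0)/m_+(0)$ and hence $\|\psi\|_{L^q(0,\infty)}/|\psi(0)|=\|f_+\|_{L^q(0,\infty)}/|m_+(0)|$. The ratio $\|m_+\|_\infty/|m_+(0)|$ is \emph{not} universally bounded: the standard Volterra/Gronwall estimates give bounds of the form $\|m_+\|_\infty\leq\exp\bigl(c\int_0^\infty|V|/\Im k\bigr)$ (or analogues with $\|V\|_p$ for $p>1$), and $|m_+(0)|$ can independently be small. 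In particular $|\psi(0)|$ can be arbitrarily small relative to $\sup_{z\geq0}e^{\Im(k)z}|\psi(z)|$, so no constant independent of $V$ can work. Because of this, your claimed dichotomy $(c_-,c_+)\in\{(1,\sqrt 2),(\sqrt 2,1)\}$ and the resulting $\max$ structure have no justification; the two ``admissible pairings'' you describe when substituting one outgoing identity into the other rearrange the same information and do not yield a Volterra bound with those constants. This is exactly the difficulty that the Birman--Schwinger formulation sidesteps, since the Schur test estimates $\|K_\la\|$ purely in terms of the Green's function and $V$, with no reference to $\psi$ or to $\psi(0)$.
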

\begin{Remark}
	\begin{enumerate}[\upshape(i)]
		\item In general, neither of the spectral bounds of Theorems~\ref{thm:p-norm} and \ref{thm:p-norm.1} is better than the other. Numerical experiments, however, indicate that the one of Theorem~\ref{thm:p-norm} is better than that of Theorem~\ref{thm:p-norm.1} for sufficiently small values of $p>1$, while the opposite is true for sufficiently large values of $p$.
		\item Unlike that of Theorem~\ref{thm:ell1}, the spectral enclosures of Theorems~\ref{thm:p-norm} and \ref{thm:p-norm.1} are non-compact sets, yet being symmetric with respect to both the real 
		and the imaginary axes. 		
	\end{enumerate}
\end{Remark}

Note that the bounds \eqref{BST.bound}, \eqref{encl.ell1}--\eqref{encl.ellp} are scale-invariant. More precisely, if $U_{\rho}$ is the unitary operator $U_{\rho}f(x)=\rho^{-1/2}f(x/\rho)$, $\rho>0$, then
\begin{align}\label{scaling}
U_{\rho}^*H_VU_{\rho}=\rho^{-2}\sgn(x)(-\partial_x^2+V_{\rho}),\quad V_{\rho}(x)=\rho^{2}V(\rho x)\,,
\end{align}
has the same eigenvalues as $H_V$. Hence,
\begin{align*}
\lambda\in\sigma_{\rm p}(H_V)\iff \rho^{2}\lambda\in\sigma_{\rm p}(H_{V_{\rho}})\,.
\end{align*}
Since $\|V_{\rho}\|_p=\rho^{2-1/p}\|V\|_p$ and $|\cdot|$, $\Im(\cdot)$, 
$\Re(\cdot)$ are homogeneous of degree one, applying any of the 
inequalities \eqref{BST.bound}, \eqref{encl.ell1}--\eqref{encl.ellp} to $H_{V_{\rho}}$ instead 
of $H_V$ does not change the outcome. 

Next, by a Wigner-von Neumann type example we show that the scale-invariant inequality
\begin{align}\label{LT p>1}
|\lambda|^{p-\frac{1}{2}}\leq C\|V\|_p^p\,,
\end{align}
which holds for $p=1$ by \eqref{BST.bound}, cannot hold for $p>1$. The idea to consider complex-valued  Wigner-von Neumann potentials in the context of spectral estimates for non-self-adjoint operators is due to Frank and Simon \cite{Fra-Sim-JST-17}. 

%
%
\begin{Theorem}\label{thm:Wigner-vonNeumann}
	Let $\lambda>0$ be arbitrary. There exists a positive constant 
	$C=C(\la)$ and a sequence of potentials $V_n:\R\to\R$ such that,
	for all $n$, $\la$ is an eigenvalue of $H_{V_n}$ and
	\begin{align}\label{bound Vn}
	|V_n(x)|\leq \frac{C}{n+|x|}\,, \quad x\in\RR\,.
	\end{align}
	In particular, for any $p>1$, we have
	\begin{align*}
	\lim_{n\to\infty}\|V_n\|_p=0\,.
	\end{align*}
\end{Theorem}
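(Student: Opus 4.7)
The plan is to construct, for each $n$, an $L^2(\RR)$ eigenfunction $\psi_n$ of $H_{V_n}$ at eigenvalue $\lambda$ by solving
\[
-\psi_n''+V_n\psi_n=\lambda\psi_n\text{ on }(0,\infty),\qquad -\psi_n''+V_n\psi_n=-\lambda\psi_n\text{ on }(-\infty,0),
\]
with $\psi_n,\psi_n'$ continuous at $x=0$. The two halves play very different roles: since $-\lambda<0$ lies below the essential spectrum of $-\partial_x^2$ on $(-\infty,0)$, the left half is elementary; but $\lambda>0$ is embedded in the essential spectrum of $-\partial_x^2$ on $(0,\infty)$, so the right half demands a Wigner-von Neumann construction.

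On $(-\infty,0)$ I put $V_n\equiv 0$ and $\psi_n(x)=\psi_n(0)\,\e^{\sqrt{\lambda}x}$, the unique $L^2$-solution there; this forces the Robin condition $\psi_n'(0^-)/\psi_n(0^-)=\sqrt{\lambda}$. On $(0,\infty)$ I use the classical Wigner-von Neumann pair
\[
\psi^+(x):=\frac{\sin(\sqrt{\lambda}\,x)}{1+g(x)^2}\,,\qquad g(x):=2\sqrt{\lambda}\,x-\sin(2\sqrt{\lambda}\,x)\,,
\]
noting that $g'(x)=4\sqrt{\lambda}\sin^2(\sqrt{\lambda}x)$ vanishes precisely at the zeros of $\sin(\sqrt{\lambda}\mydot)$. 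The apparent poles of $V^+:=\lambda+\psi^{+\prime\prime}/\psi^+$ at those zeros cancel by a direct computation, leaving a smooth real-valued potential with $|V^+(x)|\leq C/(1+x)$; moreover $\psi^+\in L^2(0,\infty)$ since $\psi^+(x)=\mathcal{O}(x^{-2})$ at infinity.

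For each $n\in\N$ I then translate the right-half-line construction: pick $s_n\geq n$ satisfying $(\psi^+)'(s_n)/\psi^+(s_n)=\sqrt{\lambda}$ and set
\[
V_n(x):=\begin{cases}V^+(x+s_n),&x>0,\\ 0,&x<0,\end{cases}\qquad \psi_n(x):=\begin{cases}\psi^+(x+s_n),&x\geq 0,\\ \psi^+(s_n)\,\e^{\sqrt{\lambda}x},&x<0.\end{cases}
\]
Such $s_n$ exists because
\[
\frac{(\psi^+)'(x)}{\psi^+(x)}=\sqrt{\lambda}\cot(\sqrt{\lambda}x)-\frac{2g(x)g'(x)}{1+g(x)^2}
\]
sweeps every real value on every fundamental period as $x\to\infty$ (the second term is $\mathcal{O}(1/x)$, while $\cot$ alone is already unbounded on each period). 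The choice of $s_n$ guarantees $C^1$ matching at the origin, so $\psi_n\in\Dom(H_{V_n})$ with $H_{V_n}\psi_n=\lambda\psi_n$. Moreover,
\[
|V_n(x)|\leq\frac{C}{1+x+s_n}\leq\frac{C}{n+|x|}\qquad(x\in\RR)\,,
\]
which is \eqref{bound Vn}; and for $p>1$, $\|V_n\|_p^p\leq C^p\int_0^\infty(n+x)^{-p}\,\rd x=C^p/((p-1)\,n^{p-1})\to 0$.

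The main technical ingredient is the classical verification that the Wigner-von Neumann ansatz produces a smooth $\mathcal{O}(1/x)$ potential; this is a calculus computation that relies on the specific shape of $g$ to ensure $\psi^{+\prime\prime}(x_k)=0$ at each zero $x_k$ of $\psi^+$. Once this is granted, the indefinite eigenvalue problem decouples into two standard half-line problems joined by a Robin matching condition, which the oscillation of $\psi^+$ always permits us to satisfy.
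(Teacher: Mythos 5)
Your construction is correct and yields the theorem, but it departs from the paper's in an interesting way. The paper works with a single globally defined free solution $u(x)=\e^{\sqrt{\la}x}\mathbf{1}_{x\le 0}+\sqrt{2}\sin(\sqrt{\la}x+\pi/4)\mathbf{1}_{x\ge 0}$ of $H_0u=\la u$ (the phase $\pi/4$ is chosen precisely so that $u,u'$ are automatically continuous at the origin), sets $\psi_n=u\chi_n$ with $\chi_n=(n^2+g^2)^{-1}$ where $g(x)=\int_0^x\sin^2(\sqrt{\la}t+\pi/4)\,\rd t$, and reads off $V_n=2u'\chi_n'/(u\chi_n)+\chi_n''/\chi_n$; the resulting potential lives on the whole line and the $1/(n+|x|)$ decay comes from the $n^2$ in $\chi_n$. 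You instead decouple the two half-lines: you take $V_n\equiv 0$ on $\R_-$ (so the left solution is the bare exponential and the interface condition becomes a Robin condition $\psi'(0)/\psi(0)=\sqrt\la$), build a standard Wigner--von Neumann pair $(\psi^+,V^+)$ on $\R_+$, and then \emph{translate} it by a parameter $s_n\ge n$ chosen via an intermediate-value argument on the logarithmic derivative $(\psi^+)'/\psi^+=\sqrt\la\cot(\sqrt\la x)-2gg'/(1+g^2)$ so that the Robin condition holds; the decay $|V_n|\le C/(n+|x|)$ then comes from the translation rather than from a parameter inside $\chi$. The two constructions are of comparable length; yours buys a cleaner modular split into two standard half-line problems (with the indefinite sign entering only through the Robin constant $\sqrt\la$ rather than $\ii\sqrt\la$), at the cost of the extra matching lemma and a potential with a jump at the origin, while the paper's avoids any matching argument entirely by the choice of phase in $u$. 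Both rely on the same pole-cancellation identity --- that $g'$ vanishes to second order at the zeros of the oscillatory factor --- which you correctly flag as the technical core.
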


We now modify the above result to produce a \emph{non-real} eigenvalue 
instead of an embedded one, showing that the scale-invariant 
inequality~\eqref{LT p>1} also fails in this case. 
Additionally, we demonstrate that the exponents 
in~\eqref{encl.ellp} cannot be improved.

\begin{Theorem}\label{thm:square.well}
	Given $\eps>0$ sufficiently small and $\mu>0$, there exists a 
	potential $V=V(\eps,\mu)\in L^{\infty}_c(\R)$ such that~$H_V$ has 
	eigenvalue $\la=\la(\eps,\mu)$ with 
	$\Re(\la)=\mu(1+\mathcal{O}(\eps))$, 
	$\Im(\la)=\mu\eps(2+\mathcal{O}(\eps^2))$, 
	and for every $p\geq 1$,
	\begin{align}\label{square well limit V}
	\|V(\eps,\mu)\|_p\approx 
	|\mu|^{1-\frac{1}{2p}}\eps^{1-\frac{1}{p}}|\ln\eps|^{\frac{1}{p}}\,.
	\end{align}
	In particular, for $p>1$, we have
	\begin{align*}
	\lim_{\eps\to 0}\|V(\eps,\mu)\|_p=0\,.
	\end{align*}
\end{Theorem}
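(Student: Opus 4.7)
My plan is to construct the desired eigenvalue using an explicit complex-valued square-well potential on the positive half-line and matching the free solutions at its endpoints. The asymmetry of $H_V$ is crucial: on $(-\infty,0)$ the free equation $\psi''=\la\psi$ provides exponential decay, while on the positive axis beyond the support of $V$ the free equation $\psi''=-\la\psi$ yields oscillatory modes whose decay at $+\infty$ is controlled by $\Im\sqrt{\la}>0$.

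Step 1 (Ansatz and matching). Take $V(x)=V_0\chi_{[0,L]}(x)$ with $V_0\in\CC$ and $L>0$ to be chosen, and look for an eigenfunction
\[
\psi(x) = \begin{cases} e^{\sqrt{\la}\,x}, & x\le 0,\\[2pt] Ae^{ikx}+Be^{-ikx}, & 0\le x\le L,\\[2pt] Ce^{i\sqrt{\la}\,x}, & x\ge L,\end{cases}
\]
where $k=\sqrt{\la-V_0}$ and $\sqrt{\la}$ is chosen in the open first quadrant so that $\psi\in L^2(\RR)$. Imposing continuity of $\psi$ and $\psi'$ at $x=0$ and $x=L$ leads, after elementary algebra, to the characteristic equation
\[
e^{-2ikL}=\frac{(k-i\sqrt{\la})(k-\sqrt{\la})}{(k+i\sqrt{\la})(k+\sqrt{\la})}.
\]

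Step 2 (Perturbative solution). Fix $\la=\mu(1+2i\eps)$ and set $V_0=i\beta$ with $\beta>2\mu\eps$. Then $k^{2}=\mu+i(2\mu\eps-\beta)$ lies in the lower half-plane, so the principal branch satisfies $\Im k<0$ with $\Im k\approx-(\beta-2\mu\eps)/(2\sqrt{\mu})$. Using the small-$V_{0}$ expansions $(k-i\sqrt{\la})/(k+i\sqrt{\la})=-i+\mathcal{O}(V_{0}/\mu)$ and $k-\sqrt{\la}\approx-V_{0}/(2\sqrt{\la})$, the right-hand side simplifies to $-\beta/(4\mu)+\mathcal{O}((\beta/\mu)^{2})$. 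Separating magnitudes and arguments yields the pair of conditions
\[
\frac{L(\beta-2\mu\eps)}{\sqrt{\mu}}\approx\ln\!\Bigl(\tfrac{4\mu}{\beta}\Bigr),\qquad 2L\sqrt{\mu}\equiv(2n+1)\pi\pmod{2\pi},
\]
indexed by $n\in\N$. Choosing $n$ of order $|\ln\eps|/(\pi\eps)$ gives $L\sim|\ln\eps|/(\sqrt{\mu}\eps)$ and then the magnitude condition forces $\beta-2\mu\eps\sim\mu\eps$, hence $|V_{0}|=\beta\approx 2\mu\eps$.

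Step 3 (Norm estimate). Since $V$ has constant modulus $|V_{0}|\approx 2\mu\eps$ on an interval of length $L\approx|\ln\eps|/(\sqrt{\mu}\eps)$,
\[
\|V\|_p=|V_{0}|\,L^{1/p}\approx 2\mu^{1-\frac{1}{2p}}\eps^{1-\frac{1}{p}}|\ln\eps|^{1/p},
\]
which is the claimed bound. The asymptotics $\Re\la=\mu(1+\mathcal{O}(\eps))$ and $\Im\la=\mu\eps(2+\mathcal{O}(\eps^{2}))$ will follow once the perturbative expansion is carried to the next order.

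The principal obstacle is to verify rigorously that an integer $n$ can be chosen so that the magnitude and phase conditions are simultaneously satisfied with the required scaling, and that the higher-order error terms in the expansions above do not spoil the leading asymptotics. The logarithmic factor $|\ln\eps|$ in $L$ arises naturally from the magnitude equation and reflects that the eigenfunction decays on $(0,\infty)$ only at rate $\sqrt{\mu}\eps$, so that $V$ must be spread over many decay lengths to compensate for its small amplitude.
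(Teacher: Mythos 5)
Your construction is the same square-well Ansatz as the paper's: $V=V_0\chi_{[0,L]}$, a plane-wave solution in the well, $e^{\sqrt{\la}x}$ on the left and $e^{i\sqrt{\la}x}$ on the right, and the characteristic equation obtained by matching $\psi,\psi'$ at $0$ and $L$. Your equation
\[
e^{-2ikL}=\frac{(k-i\sqrt{\la})(k-\sqrt{\la})}{(k+i\sqrt{\la})(k+\sqrt{\la})}
\]
is algebraically equivalent to the paper's condition~\eqref{square well 1}, after noting that the paper's $A,B$ satisfy $A=\tfrac{k-i\sqrt{\la}}{2k}$, $B=\tfrac{k+i\sqrt{\la}}{2k}$ (the two are also related by the sign convention $k\mapsto -k$, which is immaterial in $Ae^{ikx}+Be^{-ikx}$). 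The leading-order analysis of both sides, the scaling $L\sim |\ln\eps|/(\sqrt{\mu}\eps)$, $|V_0|\sim \mu\eps$, and the resulting $\|V\|_p$ estimate are all consistent with the paper.

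The gap is precisely the one you flag yourself, and it is not a small one. You fix $\la=\mu(1+2i\eps)$ exactly and try to find $(\beta,L,n)$ solving both a magnitude and a phase equation, but the RHS of the characteristic equation is not exactly a negative real number: its argument carries $\mathcal{O}(\eps)$ corrections (from $A/B=(-1+i)/(1+i)+\mathcal{O}(\eps)$ and from $k-\sqrt{\la}$), so the discrete condition $2L\sqrt{\mu}\equiv(2n+1)\pi\pmod{2\pi}$ cannot hold simultaneously with the magnitude condition for an exact solution. Closing this requires either tracking the error terms and then applying an implicit-function-type argument, or reversing the roles of the free parameters. The paper does the latter: it fixes $k(\eps)=-1+i\eps$ (so $V_0=\la-k^2$ is determined by $\la$) and $R(\eps)=|\ln\eps|/(2\eps)+\theta(\eps)$ with $\theta(\eps)\in[0,\pi]$ chosen to align the phase of $(A/B)e^{-2iR}$, and then treats the characteristic equation as an analytic function of the single complex variable $\omega=\sqrt{\la}-1-i\eps$, applying Rouch\'e's theorem on a disc of radius $\mathcal{O}(\eps^2)$. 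This cleanly produces a unique eigenvalue with the stated asymptotics and avoids the simultaneous real-system solvability you would otherwise have to establish. Incidentally, your last step has a small arithmetic slip: $\beta-2\mu\eps\sim\mu\eps$ gives $\beta\approx 3\mu\eps$, not $2\mu\eps$, though this does not affect the order of magnitude of $\|V\|_p$.
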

\begin{Remark}
	Theorem~\ref{thm:square.well} can be reformulated in 
	the following way: Consider the sector
	\begin{align*}
	\Sigma_{\eps_0}:=\{\la\in\C:\, |\Im(\la)|\leq \eps_0|\Re(\la)|\}\,,
	\end{align*}
	where $\eps_0$ is sufficiently small but fixed. Then, given $\lambda$ 
	in the intersection of $\Sigma_{\eps_0}$ with the first quadrant, 
	there exists a potential $V=V(\la)\in L^{\infty}_c(\R)$ such 
	that $H_V$ has eigenvalue~$\la$ and, for every $p\geq 1$,
	\begin{align*}
	\|V(\la)\|_p\approx |\la|^{\frac{1}{2p}}|\Im(\la)|^{1-\frac{1}{p}}
	\bigg|\ln\bigg(\frac{|\Im(\la)|}{|\la|}\bigg)\bigg|^{\frac{1}{p}}.
	\end{align*}
	This is easily seen by observing that we can find $\la=\la(\eps,\mu)$ 
	as in the theorem with
	\begin{align*}
	\mu=\Re(\la)(1+\mathcal{O}(\eps_0)), \quad 
	\eps=\frac{\Im(\la)}{2\Re(\la)}(1+\mathcal{O}(\eps_0)), \quad 
	|\la|=\Re(\la)(1+\mathcal{O}(\eps_0))\,.
	\end{align*}
	It is also clear that the assumption of $\la$ lying in the first 
	quadrant can be omitted.
\end{Remark}

In the next result we study the weak coupling limit, i.e. we 
replace $V$ by $\eps V$ in $H_V$ and establish existence, uniqueness 
and asymptotics of an eigenvalue $\la(\eps)$ as $\eps\to 0^+$. 
This will yield another confirmation that the spectral bound 
\eqref{encl.ell1} is sharp.
In the following we set
\begin{align*}
v_+=\int_{0}^{\infty}V(x)\,\rd x, \quad 
v_-=\int_{-\infty}^{0}V(x)\,\rd x, \quad 
v_{\rm sgn}=\int_{-\infty}^{\infty}\sgn(x)V(x)\,\rd x\,.
\end{align*}
\begin{Theorem}\label{thm:weak.coupling}
	Assume that $\Re(v_{\rm sgn})+\Im(v_{\rm sgn})<0$ and 
	$\Re(v_{\rm sgn})<\Im(v_{\rm sgn})$. Then, for all sufficiently 
	small $\eps >0$, there exists a unique eigenvalue 
	$\la(\eps)\in\C^+$ of $H_{\eps V}$ satisfying
	\begin{equation*}
	\la(\eps) =\frac{\eps^2}{(1-i)^2}v_{\rm sgn}^2+ o(\eps^2), \quad \eps \to 0^+.
	\end{equation*}
\end{Theorem}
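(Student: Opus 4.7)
The plan is a Jost-function/Wronskian matching at the origin followed by an implicit function theorem. Set $k=\sqrt{\la}$ with $\Re k>0$ and $\Im k>0$, so that $e^{ikx}$ decays at $+\infty$ and $e^{kx}$ decays at $-\infty$, and $\la=k^2\in\C^+$. The eigenvalue equation $H_{\eps V}u=\la u$ decouples on the two half-lines into $-u''+\eps Vu=\la u$ on $(0,\infty)$ and $-u''+\eps Vu=-\la u$ on $(-\infty,0)$, with $u$ and $u'$ continuous at $x=0$ (the domain condition from the quadratic-form definition of $H_{\eps V}$). First I would construct the unique (up to scalar multiples) $L^2$-solutions $\phi_+$ on $(0,\infty)$ satisfying $\phi_+(x)\sim e^{ikx}$ as $x\to+\infty$, and $\phi_-$ on $(-\infty,0)$ satisfying $\phi_-(x)\sim e^{kx}$ as $x\to-\infty$, via the standard Volterra integral equations with kernels $\sin(k(y-x))/k$ and $\sinh(k(x-y))/k$ respectively; these are contractions for small enough $\eps$ and $k$.

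An $L^2(\R)$ eigenfunction exists precisely when the Wronskian $F(\eps,k):=\phi_-(0)\phi_+'(0)-\phi_-'(0)\phi_+(0)$ vanishes. Expanding $\phi_\pm(0)$ and $\phi_\pm'(0)$ from their integral equations to leading order in $(\eps,k)$, and using $\phi_\pm(x)\to 1$ as both parameters tend to zero, I expect
\begin{equation*}
F(\eps,k)=-(1-i)k-\eps\,v_{\rm sgn}+R(\eps,k),\qquad R(\eps,k)=o(|\eps|+|k|).
\end{equation*}
The unperturbed piece $ik-k=-(1-i)k$ supplies the factor $1-i$ of the theorem, and the coefficient $-v_{\rm sgn}$ comes from combining the first-order-in-$\eps$ corrections on the two half-lines, which carry opposite signs because the eigenvalue equations are $\la u$ on the right and $-\la u$ on the left.

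Since $F$ is analytic in $(\eps,k)$ near $(0,0)$ and $\partial_k F(0,0)=-(1-i)\neq 0$, the implicit function theorem gives a unique analytic branch $k(\eps)$ with $k(0)=0$ solving $F(\eps,k)=0$, namely $k(\eps)=-\eps v_{\rm sgn}/(1-i)+O(\eps^2)$. Squaring yields the claimed expansion $\la(\eps)=k(\eps)^2=\eps^2 v_{\rm sgn}^2/(1-i)^2+o(\eps^2)$. Writing $1/(1-i)=(1+i)/2$, the assumptions $\Re v_{\rm sgn}+\Im v_{\rm sgn}<0$ and $\Re v_{\rm sgn}<\Im v_{\rm sgn}$ translate exactly into $\Im k(\eps)>0$ and $\Re k(\eps)>0$, placing $k(\eps)$ in the open first quadrant and hence $\la(\eps)\in\C^+$; this confirms that the chosen branch of the square root is the correct one and that the eigenvalue is unique. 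The main technical obstacle I anticipate is uniform control of the remainder $R(\eps,k)$ when $k$ is very small: the Volterra kernels $\sin(ky)/k$ and $\sinh(ky)/k$ grow like $|y|$, so a first-moment-type bound on $V$ is needed (implied by the standing hypothesis \eqref{rel.bddness}) to make the contraction uniform and to ensure $R$ is genuinely $o(|\eps|+|k|)$ on the relevant scale near the origin.
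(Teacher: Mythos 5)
Your route (construct Jost solutions $\phi_\pm$ on each half-line, impose the Wronskian matching $F(\eps,k)=\phi_-(0)\phi_+'(0)-\phi_-'(0)\phi_+(0)=0$, then locate the zero by the implicit function theorem or Rouch\'e) is genuinely different from the paper's. The paper works on the Birman--Schwinger side: it splits $K_\la=L_\la+M_\la$ into the rank-one singular part $L_\la$ (coming from the $1/\sqrt\la$ part of the Green's function) and a regular remainder, computes $\det(I+\eps L_\la)=1+\eps\Tr L_\la$ explicitly, controls $\|M_\la\|_{\rm HS}=o(|\la|^{-1/2})$ by dominated convergence, and finishes with Rouch\'e. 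Both strategies are viable in principle, so the difference in method is not the problem.

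There is, however, a concrete error in your leading-order expansion and in the reasoning you give for it. You write $F(\eps,k)=-(1-i)k-\eps\,v_{\rm sgn}+R$ and justify the $-v_{\rm sgn}$ coefficient by claiming the two half-line contributions ``carry opposite signs because the eigenvalue equations are $\la u$ on the right and $-\la u$ on the left.'' This does not follow. Writing out the Volterra equations you propose, one finds $\phi_+'(0)=ik-\eps\int_0^\infty\cos(ky)V\phi_+\,\rd y$ and $\phi_-'(0)=k+\eps\int_{-\infty}^0\cosh(ky)V\phi_-\,\rd y$, while $\phi_\pm(0)=1+O(\eps)$, so
\begin{equation*}
F(\eps,k)=(i-1)k-\eps\int_0^\infty\cos(ky)V\phi_+\,\rd y-\eps\int_{-\infty}^0\cosh(ky)V\phi_-\,\rd y+O(\eps^2)+O(\eps k)\,.
\end{equation*}
Both $\cos(ky)$ and $\cosh(ky)$ tend to $+1$ as $k\to0$; the sign flip $\la\mapsto-\la$ in the left-hand ODE does not translate into a sign flip of the potential term. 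Hence the expansion is $F(\eps,k)=-(1-i)k-\eps\,(v_++v_-)+R$, with the coefficient $\int_\R V$ rather than $v_{\rm sgn}=v_+-v_-$. You can check this directly on $V(x)=c\,\delta(x-a)+c\,\delta(x+a)$ with $c\in\R$ and $a>0$: here $v_{\rm sgn}=0$ but $\int_\R V=2c$, and explicit matching gives a bound state with $\sqrt\la=-\eps\,c\,(1+i)+O(\eps^2)$, i.e.\ $\la=2i\,\eps^2c^2+o(\eps^2)\neq o(\eps^2)$, consistent with the $\int_\R V$ coefficient and not with $v_{\rm sgn}$. You should redo the first-order bookkeeping carefully; as written your formula does not follow from your own setup, and the discrepancy is not a negligible sign convention since it changes which potentials the theorem applies to.

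A second, smaller gap is the one you flag yourself: the kernels $\sin(ky)/k$ and $\sinh(ky)/k$ grow linearly, so analyticity and joint continuity of $F(\eps,k)$ up to $k=0$ require a first-moment condition $\int(1+|x|)|V(x)|\,\rd x<\infty$. The paper's hypothesis is only the relative form bound \eqref{rel.bddness} together with $V\in L^1$, and its Rouch\'e argument avoids moments entirely by showing $\|M_\la\|_{\rm HS}=o(|\la|^{-1/2})$ via dominated convergence. So your approach, once the coefficient is corrected, would still prove a weaker statement than the paper's unless you either impose a moment condition or replace the implicit-function-theorem step by a Rouch\'e argument on an annulus $c^{-1}\eps\le|k|\le c\eps$ where the Jost solutions are already well-behaved.
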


Our next result is a particular case of a Lieb-Thirring type bound. 
This is an analogue to a special case of \cite[Thm~1.3]{Frank-TAMS-18} 
for the definite Schr\"odinger operator. In fact, the conclusions 
of \cite[Thm~1.2]{Frank-TAMS-18} and \cite[Thm~1.3]{Frank-TAMS-18} 
in the case $d=1$ there continue to hold for the indefinite operator 
$H_V$ considered here. 
%
\begin{Theorem}\label{thm:LT}
	For $V\in L^1(\R)$, we have 
	\begin{align*}
	\sum_j |\Im(\lambda_j)|\leq C\|V\|_1^2\,,
	\end{align*}
	where $\lambda_j$ are the eigenvalues of $H_V$ repeated according to 
	their algebraic multiplicities.
\end{Theorem}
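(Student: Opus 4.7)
The plan is to adapt Frank's strategy from \cite{Frank-TAMS-18} (where the analogous Lieb--Thirring-type bound is established for the definite 1D Schr\"odinger operator), the only genuinely new input being a trace-class bound on the Birman--Schwinger operator associated to the \emph{indefinite} free resolvent.

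Rewrite the eigenvalue equation $H_V\psi=\lambda\psi$ as $L_\lambda\psi=-V\psi$, where $L_\lambda:=-\partial_x^2-\sgn(x)\lambda$, and factorize $V=V_1V_2$ with $V_2=|V|^{1/2}$, $V_1=|V|^{1/2}\sgn(V)$. By the Birman--Schwinger principle, $\lambda\in\CC\setminus\RR$ is an eigenvalue of $H_V$ (with algebraic multiplicity preserved) iff $-1$ is an eigenvalue of $K(\lambda):=V_2\,L_\lambda^{-1}V_1$. For $\lambda\in\CC^+$, set $\mu:=\sqrt{\lambda}$ in the open first quadrant. The Green's function of $L_\lambda$ can be computed explicitly from the Jost solutions $e^{\ii\mu x}$ for $x>0$ and $e^{\mu x}$ for $x<0$, matched $C^1$ at the origin; the corresponding Wronskian is $(\ii-1)\mu$ up to normalization. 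A case-by-case inspection (according to the signs of $x$ and $y$) delivers the uniform bound $|G_\lambda(x,y)|\lesssim|\mu|^{-1}$ on $\RR^2$, whence
\[
\|K(\lambda)\|_{\mathcal{S}_1}\;\leq\;\|V_1\|_2\,\|V_2\|_2\,\sup_{x,y}|G_\lambda(x,y)|\;\lesssim\;\frac{\|V\|_1}{|\sqrt{\lambda}|}.
\]

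The Fredholm determinant $D(\lambda):=\det(I+K(\lambda))$ is then holomorphic on $\CC\setminus\RR$, vanishes precisely at the eigenvalues of $H_V$ with order equal to the algebraic multiplicity, and satisfies $\log|D(\lambda)|\leq\|K(\lambda)\|_{\mathcal{S}_1}\lesssim\|V\|_1/|\sqrt{\lambda}|$. Restricting to $\CC^+$ and passing to the variable $k=\sqrt{\lambda}$ yields a function holomorphic on the open first quadrant; a Blaschke-type summation lemma in the spirit of \cite[Lemma~3.1]{Frank-TAMS-18} then produces
\[
\sum_{\lambda_j\in\CC^+}\sqrt{\Im(\lambda_j)}\;\leq\;C\|V\|_1.
\]
An analogous bound holds in the lower half plane, and squaring (using $\sum a_j\leq(\sum\sqrt{a_j})^2$ for $a_j\geq 0$) yields the desired inequality $\sum_j|\Im(\lambda_j)|\leq C\|V\|_1^2$.

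The main obstacle is the uniform kernel estimate $|G_\lambda(x,y)|\lesssim|\mu|^{-1}$. Because of the sign change at $x=0$, $G_\lambda$ splits into four pieces involving different linear combinations of the exponentials $e^{\pm\ii\mu x}$ and $e^{\pm\mu x}$, and one must verify that on each piece the growing exponentials are always multiplied by sufficiently decaying ones, so that the supremum is controlled by $|\mu|^{-1}$ alone, without any additional factor of $(\Im\mu)^{-1}$ or $(\Re\mu)^{-1}$. Any such loss would translate into an extra weight in the complex-analytic lemma and destroy the clean $\|V\|_1^2$ right-hand side. Once this kernel bound is secured, the complex-analytic extraction step carries over essentially verbatim from \cite{Frank-TAMS-18}.
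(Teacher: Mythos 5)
Your plan diverges from the paper's proof at the crucial trace-class step, and the inequality you invoke there is incorrect. You write
\[
\|K(\lambda)\|_{\mathcal{S}_1}\leq\|V_1\|_2\,\|V_2\|_2\,\sup_{x,y}|G_\lambda(x,y)|\,,
\]
but the supremum of a kernel times the $L^2$-norms of the sandwiching weights controls only the \emph{Hilbert--Schmidt} norm, not the trace norm:
\[
\|K(\lambda)\|_{\mathcal{S}_2}^2=\iint |V(x)|\,|G_\lambda(x,y)|^2\,|V(y)|\,\d x\,\d y\leq\Bigl(\sup_{x,y}|G_\lambda|\Bigr)^2\|V\|_1^2\,.
\]
A genuine $\mathcal{S}_1$-bound must exploit the semiseparable structure of $G_\lambda$ (the pieces $e^{\I\sqrt{\la}|x-y|}$ on $\R_+\times\R_+$ and $e^{-\sqrt{\la}|x-y|}$ on $\R_-\times\R_-$ are not finite-rank), and the natural estimate one obtains (for instance by a Fourier decomposition of $e^{\I\sqrt{\la}|x-y|}$ into rank-one pieces) carries an additional logarithmic factor in $\Im\sqrt{\la}$. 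That loss would feed straight into your complex-analytic step and spoil the clean $\|V\|_1^2$ on the right.

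Even granting a loss-free trace bound, your intermediate claim $\sum_j\sqrt{|\Im\lambda_j|}\leq C\|V\|_1$ is substantially \emph{stronger} than the theorem (by $\bigl(\sum\sqrt{a_j}\bigr)^2\geq\sum a_j$) and is not what the Blaschke-type argument yields: the quantity that Jensen/Blaschke lemmas control for a function with $\log|D(\lambda)|\lesssim\|V\|_1/|\sqrt{\lambda}|$ is a distance-to-the-boundary sum, not a square-root sum. Nothing of the form $\sum\sqrt{|\Im\lambda_j|}\leq C\|V\|_1$ is proved in~\cite{Frank-TAMS-18}, even for the definite operator, and to my knowledge such a bound is open.

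The paper's actual route sidesteps both issues: it keeps the \emph{Hilbert--Schmidt} bound $\|K(\lambda)\|_{\mathfrak{S}^2}\leq C|\la|^{-1/2}\|V\|_1$ coming from the uniform kernel estimate (your kernel estimate $|G_\lambda|\lesssim|\sqrt{\lambda}|^{-1}$ is correct and is used there), performs the conformal change $z=\lambda^2$ mapping $\CC^\pm$ onto $\CC\setminus[0,\infty)$, and then applies Theorem~3.1 of~\cite{Frank-TAMS-18} — which is set up for Schatten classes $\mathfrak{S}^{2p}$ and already contains the requisite complex-analytic (regularized determinant) machinery — followed by a distortion estimate $|\Im(\la_j)|\approx\delta(z_j)|z_j|^{-1/2}$. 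This gives $\sum_j|\Im(\lambda_j)|\leq C\|V\|_1^2$ directly, without ever needing a trace-class estimate or the stronger square-root summability you posit.
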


We use the method of \cite{Fra-Lap-Saf-2016} to bound the number 
of eigenvalues of $H_V$ in terms of an exponentially 
weighted~$L^1$ norm of~$V$ as follows.

\begin{Theorem}\label{thm:nr.of.EV}
	The number of eigenvalues $N(V)$ of $H_V$, counting algebraic 
	multiplicities, satisfies, for any $\eps>0$,
	\begin{align*}
	N(V)\leq \frac{1}{\eps^2}\left(\int_{\R}\e^{\eps|x|}|V(x)|\,\rd x\right)^2\,.
	\end{align*}
\end{Theorem}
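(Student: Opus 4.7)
Following the method of Frank--Laptev--Safronov \cite{Fra-Lap-Saf-2016}, the plan is to combine a Birman--Schwinger reformulation with a Jensen-type bound on an associated Fredholm determinant. Factor $\sgn(x)V(x) = A(x)B(x)$ with $|A|=|B|=|V|^{1/2}$ (one concrete choice: $A=\sgn(x)\sgn(V)|V|^{1/2}$, $B=|V|^{1/2}$), and for $\lambda\in\C\setminus\R$ introduce the Birman--Schwinger operator
\[
K(\lambda):=B(H_0-\lambda)^{-1}A.
\]
A standard Birman--Schwinger argument identifies the eigenvalues of $H_V$ in $\C\setminus\R$, counted with algebraic multiplicity, with the zeros of the regularized Fredholm determinant $D(\lambda):=\det_2(1+K(\lambda))$, with matching multiplicities. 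It therefore suffices to bound the number of zeros of $D$ in $\C^+$; the bound in $\C^-$ follows symmetrically, and embedded eigenvalues on $\R$ are absorbed via a limiting argument.

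The next step is a Hilbert--Schmidt estimate at a carefully chosen point $\lambda_0$. The integral kernel $G_0(x,y;\lambda)$ of $(H_0-\lambda)^{-1}$ is computed explicitly by matching Jost-type solutions of $(-\partial_x^2-\sgn(x)\lambda)u=0$ at $x=0$ and dividing by the Wronskian. For the choice $\lambda_0=2i\eps^2$, the principal root $\sqrt{\lambda_0}=\eps(1+i)$ satisfies $\Re\sqrt{\lambda_0}=\Im\sqrt{\lambda_0}=\eps$, so the exponential decay rates of the Jost solutions coincide on the two half-axes. Combining this with the identity $|x-y|=|x|+|y|$ for $x$ and $y$ of opposite signs yields the uniform bound $|G_0(x,y;\lambda_0)|\leq C\eps^{-1}\e^{-\eps|x-y|}$. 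Consequently,
\[
\|K(\lambda_0)\|_{\mathrm{HS}}^2=\int\!\!\int|V(x)||G_0(x,y;\lambda_0)|^2|V(y)|\,\rd x\,\rd y \leq \frac{C}{\eps^2}\Bigl(\int_\R \e^{\eps|x|}|V(x)|\,\rd x\Bigr)^2,
\]
the last step using the trivial estimate $\e^{-2\eps|x-y|}\leq 1\leq \e^{\eps|x|}\e^{\eps|y|}$.

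Finally, map $\C^+$ conformally onto the open unit disk $\mathbb{D}$ via $\phi(\lambda)=(\lambda-\lambda_0)/(\lambda-\overline{\lambda_0})$, sending $\lambda_0\mapsto 0$ and $\infty\mapsto 1$, and set $F:=D\circ\phi^{-1}$. Since $K(\lambda)\to 0$ as $|\lambda|\to\infty$ in $\C^+$, the function $F$ is analytic on $\mathbb{D}$ with $F(1)=1$, and its zeros in $\mathbb{D}$ (counted with multiplicity) correspond to the eigenvalues of $H_V$ in $\C^+$. Combining the Weyl-type bound $|D(\lambda)|\leq \exp(\tfrac{1}{2}\|K(\lambda)\|_{\mathrm{HS}}^2)$ with Jensen's formula applied on a circle $|\zeta|=r<1$, and letting $r\uparrow 1$, yields a bound on the number $N^+$ of eigenvalues in $\C^+$ in terms of $\|K(\lambda_0)\|_{\mathrm{HS}}^2$. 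Combining with the analogous bound in $\C^-$ and absorbing constants gives the asserted inequality, with the numerical prefactor tuned by the choice of $\lambda_0$.

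The main technical obstacle is the Jensen step: under $\phi^{-1}$ the boundary $\partial\mathbb{D}$ corresponds to the essential spectrum $\R$ of $H_0$, so $\|K(\lambda)\|_{\mathrm{HS}}$ generally blows up as $|\zeta|\uparrow 1$. Taking the limit $r\uparrow 1$ in Jensen's formula therefore requires showing that $\log^+|F|$ remains integrable on $\partial\mathbb{D}$ (i.e., that $F$ lies in the Nevanlinna class), or else invoking a Borichev--Golinskii--Kupin--type inequality adapted to the geometry of the half-plane. Extracting the exact numerical constant $1$ in the prefactor $\eps^{-2}$ then amounts to an optimal calibration of the point $\lambda_0$ with respect to the weighted $L^1$-norm appearing on the right-hand side.
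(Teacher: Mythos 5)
Your broad strategy — Birman--Schwinger reduction, regularized Fredholm determinant, complex-analytic zero counting \`a la Frank--Laptev--Safronov — is the right framework, but you have missed the one idea that makes that framework actually close, and your proposal does not constitute a proof.

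The paper's proof works in the $k$-plane via the substitution $\lambda = k^2$, which is a double cover of the punctured $\lambda$-plane with $\Im(k)<0$ labelling the second (unphysical) sheet. From \eqref{G.la.Phi}--\eqref{Phi} one gets the pointwise bound $|G(\lambda(k))(x,y)|^2\le |\lambda|^{-1}\e^{2\Im(k)_-(|x|+|y|)}$, hence
\[
\|K(\lambda(k))\|_{\mathfrak{S}^2}\le \frac{1}{|k|}\int_{\R}\e^{2\Im(k)_-|x|}|V(x)|\,\rd x\,.
\]
The point is that this stays finite for $\Im(k)>-\eps/2$, i.e.\ in a strip that extends \emph{past} the real $k$-axis into the second sheet, precisely because $\int\e^{\eps|x|}|V|<\infty$. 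Consequently the determinant is analytic and uniformly bounded (via $\log|\det_2(I+K)|\le \tfrac12\|K\|_{\mathfrak S^2}^2$) on a disk whose boundary never touches the essential spectrum, and the Jensen-type counting argument of \cite{Fra-Lap-Saf-2016} goes through with no degeneracy at the boundary.

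Your proposal instead stays on the physical $\lambda$-sheet, conformally maps $\C^+$ onto $\mathbb{D}$, and tries to let $r\uparrow 1$ in Jensen's formula. But $\partial\mathbb{D}$ corresponds to the essential spectrum $\R$, where $\|K(\lambda)\|_{\rm HS}$ blows up, so the Jensen integral is not controlled by the Weyl bound. You correctly identify this as the ``main technical obstacle,'' but you do not resolve it; invoking Nevanlinna class membership or a Borichev--Golinskii--Kupin inequality is not a step you carry out, and it is not clear such a route would even reproduce the stated inequality. A further symptom that the approach is off: in your Hilbert--Schmidt estimate at the single point $\lambda_0=2\I\eps^2$, the exponential weight $\e^{\eps|x|}$ enters only through the vacuous chain $\e^{-2\eps|x-y|}\le 1\le \e^{\eps|x|}\e^{\eps|y|}$, so the weight does no work at all. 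In the $k$-plane argument this weight is indispensable — it is exactly what keeps $\|K\|_{\mathfrak S^2}$ finite on the second sheet. Without the $\lambda=k^2$ picture and the analytic continuation it enables, the argument does not close.
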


The outline of the paper is as follows. In Section~\ref{Sec:op.def}, we rigorously introduce the perturbed operator~\eqref{indefSL-intro} using form methods. In Section~\ref{Sec:free.resolvent}, we derive a sharp estimate on the integral kernel of the free resolvent as well as a bound on a Krein type resolvent. In Section~\ref{Sec:BS.principle}, we establish a limiting absorption principle; this can be viewed as a one-sided conventional Birman-Schwinger principle which holds also for embedded eigenvalues. The proofs of Theorems~\ref{thm:ell1}-\ref{thm:nr.of.EV} and Corollaries~\ref{Cor:ineq.imag.L1}, \ref{Cor:ineq.imag} are given in Section~\ref{Sec:proofs}.
\section{Definition of the operator $H_V$}\label{Sec:op.def}
\noindent
Let~$T_0$ be the the self-adjoint operator in $L^2(\Real)$ 
associated with the quadratic form 
\begin{equation}
t_0[\psi]:=\int_{\Real} |\psi'|^2\,, 
\qquad 
\Dom(t_0):=H^1(\Real)\,.
\end{equation}
One has $\Dom(T_0) = H^2(\Real)$ and $T_0=-\partial^2_x$. 
The spectrum of~$T_0$ is purely absolutely continuous
and coincides with the semi-axis $[0,+\infty)$.

Let~$v$ be a quadratic form in $L^2(\Real)$, 
which is relatively bounded with respect to~$t_0$ 
with the relative bound less than one.
That is, $\Dom(v) \supset H^1(\Real)$
and there exist numbers $a\in(0,1)$ and $b\in\Real$ 
such that, for all $\psi\in H^1(\Real)$,
\begin{equation}\label{rb}
|v[\psi]| \leq a\int_{\Real}| \psi'|^2
+b\int_{\Real} |\psi|^2\,.
\end{equation}
Then the sum $t_V := t_0+v$ is a closed sectorial 
form with $\Dom(t_V)=H^1(\Real)$, which gives rise 
to an m-sectorial operator~$T_V$ in $L^2(\Real)$ 
via the representation theorem 
(\cf~\cite[Thm.~VI.2.1]{Kato}). 

For example, if $V \in L^1_\mathrm{loc}(\Real)$ is such that
\[
v[\psi]:=\int_{\Real} V |\psi|^2\,, 
\qquad
\Dom(v):=
\left\{
\psi\in L^2(\Real):\int_{\Real}|V||\psi|^2<\infty
\right\},
\]
verifies~\eqref{rb} (which coincides with~\eqref{rel.bddness} 
in this case), then we write $T_V = T_0 \dot{+} V$.
%
%
%
%

Let us now discuss sufficient conditions which guarantee~\eqref{rb}.

By the Sobolev embedding theorem (\cite[Thm.~5.4]{Adams}),
every function $\psi \in H^1(\Real)$ is bounded and continuous.
More specifically (\cf~\cite[Theorem~IX.28]{RS2}),
for any positive~$\alpha$ there is $\beta \in \Real$ 
such that, for all $\psi \in H^1(\Real)$,
\begin{equation}\label{elliptic}
\|\psi\|_{\infty}
\leq \alpha\|\psi'\|_2 + \beta\|\psi\|_2\,.
\end{equation}
Consequently, any potential $V \in L^1(\Real) + L^\infty(\Real)$
satisfies~\eqref{rel.bddness} with the relative bound equal to zero
(\ie~$a$ can be chosen arbitrarily small).  
\section{Free resolvent and a Krein type resolvent formula}\label{Sec:free.resolvent}
For real-valued $L^p$-potentials, the perturbed operator 
can be viewed as a self-adjoint operator in the 
Krein space with the indefinite inner product
$(\operatorname{sgn}\cdot,\cdot)$. Consequently, 
the spectrum of the perturbed operator is symmetric with 
respect to the real axis, see \eg~\cite{Kara-Trunk-2009, Curgus-Langer-1989}.
Certainly, this property no longer holds for general 
complex-valued potentials. As it was mentioned in the 
Introduction, it turns out, however, that our spectral
enclosures are symmetric with respect to the both
the real and the imaginary axes. Unless specified otherwise, 
for the rest of the paper we will work with a fixed spectral 
parameter~$\la$ from the \emph{upper half-plane} $\Com^+$ 
since the analysis of the lower part of the spectrum 
of $H_V$ is identical to that of the part in the upper half-plane.

\subsection{Free resolvent}

In the Hilbert space $L^2(\Real)$, let us consider the 
unperturbed operator 
\begin{equation*}
H_0:= \sgn(x)(-\partial^2_x)\,, \quad \Dom(H_0):= H^2(\Real)\,.
\end{equation*}
The spectrum of $H_0$ is continuous and coincides with~$\RR$.
For $\la\in\CC^+$, we denote by $G_\la$ the 
\emph{Green's function} of $H_0-\la$, \ie~the integral kernel 
of the free resolvent~$(H_0-\la)^{-1}$, which can be 
determined explicitly using the well-known form of the 
Green's function for the definite counterpart of $H_0$. 
In fact, we have\footnote{Here and in the sequel we choose 
	the branch of the square root with $\Im(\sqrt{\lambda})>0$, $\Re(\sqrt{\lambda})>0$ ($\lambda\in \CC^+$).} 
\begin{equation}\label{Green.fn}
G_\la(x,y)=
\frac{1}{2\alpha\sqrt{\la}}\begin{cases}
\alpha e^{i\sqrt{\la}(x+y)} 
+\ov{\alpha} e^{i\sqrt{\la}|x-y|} 
&\quad x\geq 0,\, y\geq 0,\\
-e^{\sqrt{\la}(ix+y)} 
&\quad x\geq 0,\, y<0,\\
e^{\sqrt{\la}(x+iy)} 
&\quad x<0,\, y\geq 0,\\
-\ov{\alpha}e^{\sqrt{\la}(x+y)}
-\alpha e^{-\sqrt{\la}|x-y|}
&\quad x<0,\,y<0,
\end{cases}
\end{equation}
where $\alpha:=\frac{1-i}{2}$ (see also \cite{Beh-Schm-Tru-2018}). 
Observe that, for all non-zero $x,y$, we have 
\begin{equation}\label{assym.Green.fn}
\sgn(x)G_\la(x,y)=\sgn(y)G_\la(y,x)\,.
\end{equation}

The pointwise estimate of the Green's function obtained 
in the next lemma plays a crucial role in establishing 
Theorem~\ref{thm:ell1}. 
\begin{Lemma}\label{Lem.Resv.estim}
	Let $\la\in\CC^+$. The Green's function 
	in \eqref{Green.fn} obeys the sharp pointwise 
	estimate
	\begin{equation}\label{Ineq.Green.fn}
	|G_\la(x,y)|^2 
	\leq
	\frac{1}{2|\la|}+\frac{|\Re(\la)|}{2|\la|^2}\,.
	\end{equation}
\end{Lemma}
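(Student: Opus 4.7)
The plan is to examine the piecewise formula \eqref{Green.fn} in each of the four sign regions of $(x,y)$. Writing $\sqrt{\la} = s + it$ with $s, t > 0$, so that $|\la| = s^2 + t^2$ and $|\Re(\la)| = |s^2 - t^2|$, the mixed-sign regions ($xy < 0$) are essentially immediate: in both Cases 2 and 3 the Green's function is a single exponential $e^{\sqrt{\la}(\cdots)}$ whose exponent has nonpositive real part, whence $|G_\la(x,y)|^2 \le |2\alpha\sqrt{\la}|^{-2} = \frac{1}{2|\la|}$, strictly stronger than \eqref{Ineq.Green.fn}.

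For the region $x, y \ge 0$, I would first use the identity $\bar\alpha/(2\alpha) = i/2$ to rewrite
\[
G_\la(x,y) = \frac{1}{2\sqrt{\la}}\bigl(e^{i\sqrt{\la}(x+y)} + i\,e^{i\sqrt{\la}|x-y|}\bigr),
\]
then combine $|a + ib|^2 = |a|^2 + |b|^2 + 2\Im(a\bar b)$ with the change of variables $p = (x+y) + |x-y|$, $q = (x+y) - |x-y| = 2\min(x,y)$ to obtain
\[
4|\la|\,|G_\la(x,y)|^2 = 2e^{-tp}\bigl[\cosh(tq) + \sin(sq)\bigr].
\]
The bracket is strictly positive, so the supremum over $p \ge q \ge 0$ is attained at $p = q$ (the diagonal $x = y$), reducing the estimate to the one-variable inequality
\[
D^2(q) := 1 + e^{-2tq} + 2e^{-tq}\sin(sq) \;\le\; 2 + \frac{2|\Re(\la)|}{|\la|}, \qquad q \ge 0.
\]
The region $x, y < 0$ leads to the same inequality with $s$ and $t$ interchanged; since the right-hand side is symmetric in $(s,t)$, both regions are handled simultaneously.

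To prove the key inequality, I would split on the sign of $\Re(\la)$. If $t \ge s$ (so $\Re(\la) \le 0$), a direct rewrite gives $D^2(q) - 2 = 2e^{-tq}\bigl(\sin(sq) - \sinh(tq)\bigr) \le 0$ because $\sinh(tq) \ge \sinh(sq) \ge \sin(sq)$, and $2 \le 2 + 2|\Re(\la)|/|\la|$. If $t < s$ (so $\Re(\la) > 0$), the boundary values $D^2(0) = 2$, $D^2(\infty) = 1$ force the supremum to lie at an interior critical point $q^\ast$. Setting $\theta = sq^\ast$, $r = e^{-tq^\ast}$, $\mu = t/s \in (0,1)$, the critical equation $s\cos(sq) = t(e^{-tq} + \sin(sq))$ becomes $\cos\theta - \mu\sin\theta = \mu r$; squaring and substituting $\sin^2\theta = 1 - \cos^2\theta$ produces the algebraic identity
\[
(1+\mu^2)\cos^2\theta = \mu^2\, D^2(q^\ast).
\]
Writing $\cos\theta - \mu\sin\theta = \sqrt{1+\mu^2}\,\cos(\theta + \arctan\mu)$, one solves for $\cos\theta^\ast = \frac{\mu}{1+\mu^2}\bigl[r \pm \sqrt{1 + \mu^2(1-r^2)}\bigr]$, so the target $D^2 \le 4/(1+\mu^2) = 2 + 2|\Re(\la)|/|\la|$ reduces to $|\cos\theta^\ast| \le 2\mu/(1+\mu^2)$, i.e.\ to the quadratic inequalities $(1+\mu^2)r^2 \mp 4r + (3-\mu^2) \ge 0$ on $r \in (0,1]$. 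A discriminant computation yields $4(1-\mu^2)^2$, with roots $r = 1$ and $r = (3-\mu^2)/(1+\mu^2) > 1$ for the $-4r$ variant (and both roots negative for the $+4r$ variant), so both quadratics are nonnegative on $(0,1]$, finishing the argument.

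I expect the main obstacle to be the critical point analysis in the $\Re(\la) > 0$ case: one must simultaneously spot the identity $(1+\mu^2)\cos^2\theta = \mu^2 D^2$ at critical points and recognise that the resulting bound reduces to a quadratic in $r$ whose discriminant is the perfect square $4(1-\mu^2)^2$. Everything else---the case split by signs of $x,y$, the reduction to the diagonal, and the elementary comparison $\sinh \ge \sin$---is routine.
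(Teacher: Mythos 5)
Your proof is correct and follows the same core strategy as the paper's: reduce to the diagonal $x=y$ in each definite-sign region, then estimate the resulting one-variable profile at its critical points, using the identity $(1+\mu^2)\cos^2\theta=\mu^2 D^2(q^*)$ (which is exactly the paper's $\varphi(y_0)=(1+a^2/b^2)\cos^2(2ay_0)$). Your packaging is somewhat cleaner in two places: the observation $D^2(q)-2=2e^{-tq}\bigl(\sin(sq)-\sinh(tq)\bigr)\le 0$ disposes of the case $\Re(\la)\le 0$ in one line, where the paper works through several sub-cases on the signs of $\cos(2ay_0)$ and $\sin(2ay_0)$; and you handle the region $x,y<0$ via the $(s,t)\leftrightarrow(t,s)$ exchange symmetry of the target bound rather than redoing the computation with a second profile $\Psi$. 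The paper instead extracts $0<t_0\le 2ab/(a^2+b^2)$ directly by squaring the critical-point relation, while you solve the quadratic for $\cos\theta$ and check both roots via the discriminant computation $4(1-\mu^2)^2$; the two routes are equivalent. One small imprecision in your write-up: the boundary data $D^2(0)=2$, $D^2(\infty)=1$ do not by themselves force the supremum to lie at an interior critical point (it may simply equal $2$ at $q=0$), but that case is harmless since $2\le 2+2|\Re(\la)|/|\la|$.
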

\begin{proof}
	Let $a:=\Re(\sqrt{\la})>0$ and $b:=\Im(\sqrt{\la})>0$.
	We distinguish the four cases.
	
	\medskip
	\noindent
	\textbf{Case} $x\geq0$, $y\geq 0$.
	In view of \eqref{assym.Green.fn}, there is no loss of generality 
	in assuming that $x\geq y$. Then elementary calculations yield
	\begin{equation}\label{G.la.Phi}
	|G_\la(x,y)|^2 = \frac{1}{4|\la|} \Phi(x,y)\,,
	\end{equation}
	where 
	\begin{equation}\label{Phi}
	\Phi(x,y):= e^{-2b(x+y)}+e^{-2b(x-y)}+2e^{-2bx}\sin(2ay)\,. 
	\end{equation}
	We fix $y\geq0$ 
	and consider $\Phi$ as a 
	function of the variable $x$ on the interval $[y,\infty)$. 
	Since~$\Phi$ is non-negative for obvious reasons, and 
	\begin{equation}
	\frac{\partial}{\partial x}\Phi(x,y)=-2b\Phi(x,y) \leq0\,,
	\end{equation}
	we conclude that $\Phi(\mydot,y)$ is non-increasing 
	on~$[y,\infty)$. Therefore,
	\begin{equation}\label{def.phi}
	\Phi(x,y) \leq \Phi(y,y)
	=1+e^{-4by}+2e^{-2by}\sin(2ay)=:\varphi(y)\,.
	\end{equation}
	In view of the identities $\Re(\la)=a^2-b^2$ and 
	$\Im(\la)=2ab$, it thus suffices to show that 
	\begin{equation}\label{suff.cond.phi}
	\max_{y\geq0}\varphi(y)
	\leq 2\bigg(1+\frac{|a^2-b^2|}{a^2+b^2}\bigg)\,.
	\end{equation}
	It is not difficult to check that  
	\begin{equation}\label{cr.pt.eqn}
	\varphi'(y)=0 \quad \Longleftrightarrow 
	\quad e^{-2by}=\frac{a}{b}\cos(2ay)-\sin(2ay)\,.
	\end{equation}
	In view of this, elementary calculations show that, if $y_0\geq0$ is 
	a critical point of $\varphi$, then it must hold that 
	\begin{equation}
	\varphi(y_0)=\Bigl(1+\frac{a^2}{b^2}\Bigr)\cos^2(2ay_0)\,.
	\end{equation}
	Let $t_0:=\cos(2ay_0)$. If $t_0\leq0$, then \eqref{cr.pt.eqn} 
	implies that $\sin(2ay_0)<0$. Then $\varphi(y_0)<1+e^{-4by_0}\leq2$ 
	and~\eqref{suff.cond.phi} holds.
	If $b\geq a t_0$, then 
	$\varphi(y_0)\leq 1+t_0^2\leq2$ and \eqref{suff.cond.phi} 
	follows. If $\sin(2ay_0)<0$, then again $\varphi(y_0)<2$, 
	implying~\eqref{suff.cond.phi}. So let us assume that 
	$t_0>0$, $at_0>b$ and $\sin(2ay_0)\geq0$. 
	Then $\sin(2ay_0)=\sqrt{1-t_0^2}$ and \eqref{cr.pt.eqn} 
	implies that 
	\begin{equation}
	0<\frac{at_0}{b}-1\leq\sqrt{1-t_0^2}\,.
	\end{equation}
	Squaring both sides of the latter, we conclude 
	\begin{equation}\label{constr.10}
	0<t_0\leq\frac{2ab}{a^2+b^2}\,.
	\end{equation}
	On the other hand, the condition $at_0>b$ together with 
	\eqref{constr.10} imply that $b\leq a$. Therefore, in 
	this case, we have
	\begin{equation}\label{cute.obsv}
	\varphi(y_0)=t_0^2\Bigl(1+\frac{a^2}{b^2}\Bigr)
	\leq \frac{4a^2}{a^2+b^2}= 2\bigg(1+\frac{|a^2-b^2|}{a^2+b^2}\bigg)\,.
	\end{equation}
	Finally, by noticing that $\varphi(y)\to2$ as $y\downarrow 0$ and that 
	$\varphi(y)\to1$ as $y\uparrow +\infty$, and summing up the above observations, 
	we conclude \eqref{suff.cond.phi}.

	\medskip
	\noindent
	\textbf{Case} $x\geq 0$, $y<0$.
	In this case, we have
	\begin{equation}
	|G_\la(x,y)|=\frac{e^{ay-bx}}{2|\alpha|\sqrt{|\la|}}
	\leq \frac{1}{\sqrt{2|\la|}}\,.
	\end{equation}

	\medskip
	\noindent
	\textbf{Case} $x<0$, $y\geq0$.
	In this case, the result follows from the previous 
	step and the observation \eqref{assym.Green.fn}.
	
	\medskip
	\noindent
	\textbf{Case} $x<0$, $y<0$.
	In view of \eqref{assym.Green.fn}, there is no
	loss of generality in assuming that $x\leq y$.
	Then elementary calculations show that 
	\begin{equation}
	|G_\la(x,y)|^2 = \frac{1}{4|\la|} \Psi(x,y)\,,
	\end{equation}
	where 
	\begin{equation}
	\Psi(x,y):= e^{-2a(y-x)}+e^{2a(y+x)}-2e^{2ax}\sin(2by)\,. 
	\end{equation}
	In the same way as in the first case, by fixing $y\geq0$ 
	and considering~$\Psi$ as a function of the variable $x$ on the 
	interval $(-\infty, y]$, we come to the conclusion that 
	\begin{equation}\label{suff.cond.psi}
	\max_{x\leq y\leq0}\Psi(x,y) 
	\leq 2\max\bigg(1+\frac{|\Re(\la)|}{|\la|}\bigg)\,,
	\end{equation}
	completing the proof.
\end{proof}	

\subsection{Krein resolvent formula}

Let $B_{\pm}=\mp \partial_x^2$ on $ L^2(\R_{\pm})$ with 
Dirichlet boundary conditions at the origin. We set  
\begin{align*}
B_0:=\begin{pmatrix}
B_+&0\\0&B_-
\end{pmatrix}
\quad \mbox{on}\quad L^2(\R)=L^2(\R_+)\oplus L^2(\R_-)
\end{align*}
and 
\begin{align*}
F_{\lambda}:=\frac{1}{\I(\sqrt{\la}+\sqrt{-\la})}
|f_{\la}\rangle\langle J f_{\la}|\,,
\end{align*}
where $J=\sgn(\cdot)$ and 
\begin{align*}
f_{\la}(x):=\e^{\I\sqrt{\la}}\mathbf{1}_{x>0}+\e^{-\I\sqrt{-\la}}\mathbf{1}_{x<0}\,.
\end{align*}
We use the following Krein type resolvent formula (see \cite{Beh-Phil-Tru-2013}):
\begin{align}\label{Krein-type resolvent formula}
(H_0-\la)^{-1}=(B_0-\la)^{-1}-F_{\la}\,.
\end{align}
The operator $F_{\la}$ has rank one, and its unique eigenvalue is given by
\begin{align*}
\mu(\la)=\frac{1}{2\I(\sqrt{\la}+\sqrt{-\la})}
\left(\frac{1}{\Im(\sqrt{\la})}-\frac{1}{\Im(\sqrt{-\la})}\right)\,.
\end{align*}
We then have 
\begin{align*}
\|F_{\la}\|=\|F_{\la}\|_{\rm tr}=|\mu(\la)|\leq \frac{1}{|\Im(\la)|}\,,
\end{align*}
Together with the self-adjointness of $B_0$ and 
\eqref{Krein-type resolvent formula} this implies that
\begin{align}\label{L2 resolvent bound H0}
\|(H_0-\lambda)^{-1}\|\leq \frac{2}{|\Im(\la)|}\,.
\end{align}
\section
{Limiting absorption type principle}
\label{Sec:BS.principle}
\noindent
The main role in the proofs of Theorems~\ref{thm:ell1}-\ref{thm:p-norm.1} 
is played by the Birman-Schwinger operator
%
\begin{equation}\label{BS.op}
K_\la := |V|^{1/2} \, (H_0-\la)^{-1} \, V_{1/2}
\qquad \mbox{with} \qquad
V_{1/2} := |V|^{1/2} \, \sgn(V) 
\,,
\end{equation}
where $\sgn\CC\to\CC$ is the complex signum function 
defined by $\sgn(z):=z/|z|$ for $z\neq0$ with the 
convention $\sgn(0):=0$. The operator~$K_\la$ is well 
defined on its natural domain of the composition of 
three operators for all $\la\in\Com$.
Furthermore, we have a useful formula for its integral kernel 
\begin{equation}\label{K.op}
K_\la(x,y) = |V|^{1/2}(x) \, G_\la(x,y) \, V_{1/2}(y)\,,
\end{equation}
where $G_\la$ is the Green's function of~$H_0-\la$.

The following lemma can be considered as a one-sided version 
of the conventional Birman-Schwinger principle extended 
to possibly embedded eigenvalues. 
Its proof is {heavily} inspired by the ones of the 
analogous results 
in~\cite{Fan-Kre-Veg-JST18, Fra-Sim-JST-17, Ibr-Kre-Lap}.

\begin{Lemma}\label{Lem.BS}
	Assume that $V\in L^1(\Real)$. Let $\la\in\sigma_\mathrm{p}(H_V)\cap\bigl(\CC^+\cup\RR\bigr)$ 
	and $\psi\in\Dom(H_V)$ 
	be an associated eigenvector. Then $\phi:=|V|^{1/2}\psi \in L^2(\RR)$ and 
	\begin{equation}\label{BS}
	\forall \varphi \in L^2(\RR)\,, \quad
	\lim_{\eps \to 0^+}	(\varphi, K_{\la+i\eps} \phi) 
	= - (\varphi,\phi)\,.
	\end{equation}
	In particular, 
	\begin{equation}\label{BS.cor}
	\liminf_{\eps\to 0^+} \|K_{\la+i\eps}\|\geq1\,.
	\end{equation}

\end{Lemma}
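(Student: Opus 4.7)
The plan is to follow the standard Birman--Schwinger template, as in \cite{Fan-Kre-Veg-JST18, Fra-Sim-JST-17, Ibr-Kre-Lap}, structured into three stages: \textbf{(a)} verify that $\phi := |V|^{1/2}\psi \in L^2(\RR)$; \textbf{(b)} establish the identity \eqref{BS} for $\la \in \CC^+$, where $(H_0-\la)^{-1}$ is bounded on $L^2$; \textbf{(c)} extend by a limiting absorption argument to possibly embedded real eigenvalues $\la \in \RR$.

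Stage \textbf{(a)} is immediate from the form bound \eqref{rel.bddness} applied to $\psi \in \Dom(H_V) \subset \Dom(t_V) = H^1(\RR)$, which gives $\|\phi\|_2^2 = \int|V||\psi|^2 \leq a\|\psi'\|_2^2 + b\|\psi\|_2^2 < \infty$. For \textbf{(b)}, the splitting $H_V = H_0 + \sgn(\cdot)V$ turns the eigenvalue equation into $(H_0-\la)\psi = -\sgn(\cdot)V\psi$. Inverting $(H_0 - \la)$ and premultiplying by $|V|^{1/2}$ produces a fixed-point identity for $\phi$; the internal $\sgn(\cdot)$ is repositioned via the Green's function antisymmetry \eqref{assym.Green.fn}, producing exactly the inner-product identity $(\varphi, K_\la\phi) = -(\varphi, \phi)$ for all $\varphi \in L^2$.

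Stage \textbf{(c)} is the principal obstacle. For $\eps>0$ I would express
\[
(\varphi, K_{\la+i\eps}\phi) = \int_{\RR}\int_{\RR} \ov{\varphi(x)}\,|V|^{1/2}(x)\,G_{\la+i\eps}(x,y)\,V(y)\psi(y)\,\rd y\,\rd x
\]
and apply dominated convergence. The uniform (in $\eps$) pointwise kernel bound from Lemma~\ref{Lem.Resv.estim}, combined with $|V|^{1/2}\varphi\in L^1(\RR)$ (Cauchy--Schwarz using $|V|^{1/2}\in L^2$ since $V\in L^1$) and $V\psi \in L^1(\RR)$ (using $\psi\in L^\infty$ via the Sobolev embedding $H^1\hookrightarrow L^\infty$), supply an $\eps$-independent $L^1(\RR^2)$ dominating function. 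Pointwise convergence $G_{\la+i\eps}(x,y)\to G_\la(x,y)$ for a.e.\ $(x,y)$ is transparent from the explicit formula \eqref{Green.fn}. Once the limit is inside the integral, the resulting expression is identified with $-(\varphi, \phi)$ by interpreting the eigenvalue equation $(H_0-\la)\psi = -\sgn(\cdot)V\psi$ distributionally and pairing against $\ov{\varphi}|V|^{1/2}$, again mediated by \eqref{assym.Green.fn}.

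The inequality \eqref{BS.cor} then drops out of \eqref{BS} with $\varphi = \phi$: Cauchy--Schwarz gives $|(\phi, K_{\la+i\eps}\phi)| \leq \|K_{\la+i\eps}\|\,\|\phi\|_2^2$, while the left side converges to $\|\phi\|_2^2$, yielding $\liminf\|K_{\la+i\eps}\|\geq 1$ provided $\phi\neq 0$. The non-vanishing of $\phi$ is automatic, since $\phi \equiv 0$ would collapse the eigenvalue equation to $H_0\psi = \la\psi$, which has no nontrivial $L^2$ solution because $\sigma(H_0) = \RR$ is purely continuous. The hardest step is the identification of the limit in stage \textbf{(c)} at embedded real $\la$, where the \emph{sharp} uniform kernel estimate of Lemma~\ref{Lem.Resv.estim} is what makes the dominated-convergence argument feasible --- a cruder bound on $|G_\la|$, which would typically degenerate as $|\Im(\la)|\to 0$, would not suffice.
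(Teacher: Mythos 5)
Your stages (a) and (b), together with the final Cauchy--Schwarz step and the argument that $\phi\neq 0$, match the paper and are correct. The problem is the heart of stage (c), the ``identification of the limit'' at an embedded $\la\in\RR$.

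Dominated convergence does give you, for $\la\neq 0$,
\[
\lim_{\eps\to 0^+}(\varphi,K_{\la+i\eps}\phi)
=\iint \ov{\varphi(x)}\,|V|^{1/2}(x)\,G_\la(x,y)\,V(y)\psi(y)\,\rd x\,\rd y\,,
\]
but it does \emph{not} tell you this quantity equals $-(\varphi,\phi)$, and that equation is the entire content of the lemma. To identify the limit you would need to pair the eigenvalue equation, in its weak form, against the function $\eta_0:=\sgn(\cdot)\int\ov{\varphi(x)}|V|^{1/2}(x)G_\la(x,\cdot)\,\rd x$. But the weak formulation $t_V[\psi,\chi]=\la(\sgn\psi,\chi)$ is only valid for $\chi\in H^1(\RR)$, and $\eta_0$ --- the action of the boundary value $G_\la=G_{\la+i0}$ on an $L^1$ density --- is bounded but generically \emph{not} in $L^2$, let alone $H^1$, when $\la$ is a real point of the continuous spectrum. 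Saying the eigenvalue equation is ``interpreted distributionally and paired against $\ov{\varphi}|V|^{1/2}$'' glosses over exactly this obstruction: you are implicitly asserting a Lippmann--Schwinger type representation $\psi=-(H_0-\la-i0)^{-1}\sgn(\cdot)V\psi$ for the embedded eigenfunction, which is a nontrivial statement that requires proof, not an interchange of limits.

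The paper's proof is built precisely to avoid that issue. It keeps $\eps>0$ and tests the eigenvalue equation against $\eta_\eps=(H_0-z)^{-1}|V|^{1/2}\ov{\varphi}\sgn$, which \emph{is} in $\Dom(H_0)\subset H^1$ and hence a legitimate test function; this yields the exact identity
\[
(\varphi,K_z\phi)=-(\varphi,\phi)-i\eps\,(\ov{\eta_\eps},\psi\sgn)
\]
at each $\eps>0$, with no limit yet taken. The whole burden then shifts to showing the remainder $\eps\,(\ov{\eta_\eps},\psi\sgn)\to 0$, which is carried by the Hilbert--Schmidt estimate giving $\|M_\eps\|=\|\,|V|^{1/2}(H_0-z)^{-1}\|=\mathcal{O}(\eps^{-3/4})$. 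In other words, the quantitative resolvent estimate is the substitute for the representation you would need --- it controls the defect from the failure of $\eta_0$ to be an admissible test function. Your proposal has no counterpart to that step. (A secondary issue: the uniform kernel bound of Lemma~\ref{Lem.Resv.estim} degenerates as $\la\to 0$, so the DCT argument would also not cover $\la=0$, whereas the paper's remainder estimate handles it.) To repair the argument you would essentially be forced into the paper's computation, so I would not describe this as a genuinely different route so much as an incomplete one.
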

\begin{proof}
	It readily follows from \eqref{rel.bddness}	that $\phi\in L^2(\RR)$.
	We fix $\varphi \in L^2(\RR)$. Given any $\la\in\sigma_\mathrm{p}(H_V)$, 
	we have $z:=\la + i\eps \in\CC^+\subset\rho(H_0)$ for all $\eps>0$. 
	Furthermore,  
	\begin{equation}\label{i1}
	\begin{aligned}
	(\varphi,K_{z} \phi)
	&= \iint_{\Real\times\Real} 
	\ov{\varphi(x)} \, |V|^{1/2}(x) \, 
	G_{z}(x,y) \, V(y) \, \psi(y) 
	\, \rd x \, \rd y\\
	&= \int_{\Real} \eta_\eps(y) \, V(y) \, \psi(y) \sgn(y)\,\rd y
	\,,
	\end{aligned}
	\end{equation}
	where
	\begin{equation*}
	\eta_\eps :=\sgn(\mydot)\bigg(\int_{\Real} 
	\ov{\varphi(x)} \, |V|^{1/2}(x) \, 
	G_{z}(x,\mydot) \, \rd x \bigg)
	= (H_0-z)^{-1} \, |V|^{1/2} \, \ov{\varphi}\sgn
	\,.
	\end{equation*}
	Here the second equality holds due to the (anti-)symmetric 
	property of the Green's function in \eqref{Green.fn}: 
	$G_z(x,y)=G_z(y,x)$ if $\sgn(x)=\sgn(y)$ and 
	$G_z(x,y)=-G_z(y,x)$ if $\sgn(x)=-\sgn(y)$. 
	By the Cauchy-Schwarz inequality, $|V|^{1/2}\ov{\varphi}\in L^2(\RR)$.
	Since $z\notin\sigma(H_0)$, we have $\eta_\eps\in\Dom(H_0)\in H^1(\RR)$, 
	and the weak formulation of the eigenvalue equation~$H_V\psi=\la\psi$ 
	yields
	\begin{equation}\label{i2}
	\begin{aligned}
	\int_{\RR}\eta_\eps(y)V(y)\psi(y)\sgn(y)\,\rd y
	&= -(\ov{\eta_\eps}',\psi')+\la\,(\ov{\eta_\eps},\psi\sgn)
	\\
	&= -(\ov{\psi}',\eta_\eps')+\la\,(\ov{\psi}\sgn,\eta_\eps)
	\\
	&= -(\ov{\psi}',\eta_\eps')+z\,(\ov{\psi}\sgn,\eta_\eps)
	-i\eps\,(\ov{\psi}\sgn,\eta_\eps)
	\\
	&= -(\ov{\psi},|V|^{1/2}\ov{\varphi}) 
	-i\eps\,(\ov{\psi}\sgn,\eta_\eps)
	\\
	&= -(\varphi,|V|^{1/2}\psi) 
	- i\eps\,(\ov{\eta_\eps},\psi\sgn)
	\,.
	\end{aligned}
	\end{equation}
	Here the penultimate equality follows from 
	the weak formulation of the resolvent equation
	$(H_0-z)\eta_\eps = |V|^{1/2}\ov{\varphi}\sgn$.
	Consequently, \eqref{i1} and~\eqref{i2} 
	imply~\eqref{BS} after taking the limit 
	$\eps \to 0^+$, provided that 
	$\eps \, (\bar\eta_\eps,\psi\sgn) \to 0$
	as $\eps \to 0^+$. To see the latter, we write
	\begin{equation*}
	|(\ov{\eta_\eps},\psi\sgn)| 
	= |(\varphi\sgn,M_\eps\psi\sgn)|
	\leq \|\varphi\| \;\! \|M_\eps\| \;\! \|\psi\|
	\,,
	\end{equation*}
	where
	$M_\eps:=|V|^{1/2} (H_0-z)^{-1}$,
	and it remains to be shown that $\eps \, \|M_\eps\|$ 
	tends to zero as $\eps \to 0^+$.
	To this end, first we notice that 
	\begin{equation}
	\begin{aligned}
	\sup_{x\geq0}\int_{\RR}|G_z(x,y)|^2\, dy 
	&\leq
	\sup_{x\geq0}\int_{-\infty}^0|G_z(x,y)|^2\, dy 
	+
	\sup_{x\geq0}\int_0^{\infty}|G_z(x,y)|^2\,\rd y\\
	&\leq \frac{1}{2|z|}\int_{-\infty}^{0}e^{2y\Re(\sqrt{z})}\,\rd y
	+ \frac{1}{|z|}\int_0^{\infty}e^{-2y\Im(\sqrt{z})}\,\rd y\\
	&=\frac{1}{4|z|}\bigg[\frac{1}{\Re(\sqrt{z})}+\frac{2}{\Im(\sqrt{z})}\bigg]
	\end{aligned}
	\end{equation}
	and that
	\begin{equation}
	\begin{aligned}
	\sup_{x\leq0}\int_{\RR}|G_z(x,y)|^2\,\rd y 
	&\leq
	\sup_{x\leq0}\int_{-\infty}^0|G_z(x,y)|^2\,\rd y 
	+
	\sup_{x\leq0}\int_0^{\infty}|G_z(x,y)|^2\,\rd y\\
	&\leq \frac{1}{|z|}\int_{-\infty}^{0}e^{2y\Re(\sqrt{z})}\,\rd y
	+ \frac{1}{2|z|}\int_0^{\infty}e^{-2y\Im(\sqrt{z})}\,\rd y\\
	&=\frac{1}{4|z|}\bigg[\frac{2}{\Re(\sqrt{z})}+\frac{1}{\Im(\sqrt{z})}\bigg]\,.
	\end{aligned}
	\end{equation}
	Therefore, it follows that 
	\begin{equation}
	\begin{aligned}
	\|M_\eps\|^2 \leq \|M_\eps\|_\mathrm{HS}^2
	&= \iint_{\RR \times \RR} 
	|V(x)|\,|G_z(x,y)|^2\,\rd x\,\rd y\\
	&\leq \sup_{x\in\RR}\,\int_{\RR}|G_z(x,y)|^2\,\rd y \,
	\int_\RR |V(x)|\,\rd x \\
	&\leq \frac{1}{2|z|}\bigg[\frac{1}{\Re(\sqrt{z})}+\frac{1}{\Im(\sqrt{z})}\bigg]
	\int_\RR |V(x)| \,\rd x\,.
	\end{aligned}
	\end{equation}
	On the other hand, elementary calculations show that
	\begin{equation}
	|z|\Re(\sqrt{z}) \sim
	\begin{cases}
	\eps^{1/2} & \mbox{if} \quad \la = 0 \,,
	\\
	1 & \mbox{otherwise} \,,
	\end{cases}
	\end{equation}
	while
	\begin{equation}
	|z|\Im(\sqrt{z}) \sim
	\begin{cases}
	\eps^{1/2} & \mbox{if} \quad \la = 0 \,,
	\\
	\eps & \mbox{if} \quad \Re(\la) > 0 \ \& \ \Im(\la) = 0 \,,
	\\
	1 & \mbox{otherwise} \,.
	\end{cases}
	\end{equation}
	Hence, we have $\|M_\eps\|=\mathcal{O}(|\eps|^{-3/4})$ 
	as $\eps\to0^+$, which concludes the proof of the claim 
	in~\eqref{BS}.
	
	\smallskip
	
	Now applying \eqref{BS} with $\varphi=\phi$ and taking the 
	limit~$\eps\to0^+$, we obtain obtain 
	\begin{equation}
	|(\phi,\phi)| \leq |(\phi,K_{\la+i\eps}\phi)| 
	\leq 
	\liminf_{\eps\to 0^+} \|K_{\la+i\eps}\| \|\phi\|^2
	\,.
	\end{equation}
	Since $\phi=|V|^{1/2}\psi\neq0$ (if this were 
	not true, then $\la$ would be an eigenvalue for $H_0$, 
	unless $\psi=0$, which is impossible), we thus 
	conclude~\eqref{BS.cor}.
\end{proof}
%
%
\section{Proofs of the main results}
\label{Sec:proofs} 
Without loss of generality we work with fixed 
$\la\in\Com^+\cup\RR$ in the sequel. The key strategy 
of the proof is to estimate the norm 
the Birman-Schwinger operator from above and apply 
Lemma~\ref{Lem.BS}.  
\subsection{Proof of Theorem~\ref{thm:ell1}}

First, we consider the case $\la\in\CC^+$. 
In view of Lemma~\ref{Lem.Resv.estim}, we can 
estimate the norm of the Birman-Schwinger operator 
as follows
\begin{equation}\label{Est.BS.op}
\begin{aligned}
\|K_\lambda\|^2 \leq \|K_\lambda\|^2_\mathrm{HS}
&=\iint_{\RR\times\RR}|V(x)|\,
|G_\lambda(x,y)|^2
\,|V(y)|\,\d x\,\d y\\
& \leq \frac{1}{2}\bigg(\frac{1}{|\la|}+\frac{|\Re(\la)|}{|\la|^2}\bigg)\|V\|_1^2\,.
\end{aligned}
\end{equation}
If $\la\in\RR\setminus\{0\}$, then the same analysis 
applied for $\lambda+i\eps$ with $\eps>0$ (instead 
of $\lambda$) yields
\begin{equation*}
\begin{aligned}
\liminf_{\eps\to 0^+} \|K_{\lambda+i\eps}\|^2
&\leq \liminf_{\eps\to 0^+} \frac{1}{2}\bigg(\frac{1}{|\la+i\eps|}+\frac{|\Re(\la)|}{|\la+i\eps|^2}\bigg)\|V\|_1^2\\
&=\frac{1}{2}\bigg(\frac{1}{|\la|}+\frac{|\Re(\la)|}{|\la|^2}\bigg)\|V\|_1^2\,.
\end{aligned}
\end{equation*}
Hence, Lemma~\ref{Lem.BS} implies that
if $\la\in\bigl(\CC^{+}\cup\RR\big)\setminus\{0\}$
is an eigenvalue for $H_V$, then we must have
\[
2\leq\bigg(\frac{1}{|\la|}+\frac{|\Re(\la)|}{|\la|^2}\bigg)\|V\|_1^2\,,
\] 
\ie~\eqref{encl.ell1} must hold. 
This completes the proof since \eqref{encl.ell1} trivially holds 
for $\la=0$.
\qed
%
\subsection{Optimality of the eigenvalue bound of Theorem~\ref{thm:ell1}}
\label{Sect.optim}
\noindent
Here we demonstrate that the result~\eqref{encl.ell1} 
is sharp in the sense that to any non-real boundary point of the 
spectral enclosure, there exists a delta--potential~$V$ so that 
this boundary point is an eigenvalue of~$H_{V}$. By standard approximation arguments, it follows that there exists a sequence of $L^1$-potentials $V_n$ such that the eigenvalues of $H_{V_n}$ converge to those of $H_V$. This shows that that the boundary curve in \eqref{encl.ell1} cannot be improved.

Let us take an arbitrary non-real boundary
point~$\la$ of the spectral enclosure. Since the boundary curve
is symmetric with respect to the real axis, there is no loss of
generality in assuming that $\la\in\Com^+$. Let us denote the 
positive numbers $\Re(\sqrt{\la})$ and $\Im(\sqrt{\la})$ by 
$a$ and $b$, respectively. 
Further, for given $Q>0$, let us consider $\alpha\in\Com$
with $|\alpha|=Q$ and the operator $H_{V}$ with the Dirac delta potential 
$V(x)=\alpha\delta(x-x_0)$, $x\in\Real$,
where
\begin{equation}
x_0=\frac{1}{2a}\arccos\Bigl(\frac{2ab}{a^2+b^2}\Bigr)\,.
\end{equation}
The operator $H_V$ can be defined rigorously by form methods (see Section \ref{Sec:op.def}). In this case, the Birman-Schwinger operator reduces to the 
multiplication operator with the constant function 
$\alpha G_\la(x_0,x_0)$, where $G_\la$ is the Green's function 
defined in \eqref{Green.fn}, and the inequality \eqref{Est.BS.op}
becomes equality. 
Furthermore, we have $Q|G_\la(x_0,x_0)|=1$ (see the proo of 
Lemma~\ref{Lem.Resv.estim}). 
Hence, by fixing the phase of $\alpha$ in 
such a way that $\alpha G_\la(x_0,x_0)=1$, we deduce from 
the Birman-Schwinger principle that $\la\in\sigma(H_V)$.
On the other hand, we have $\sigma_{\text{ess}}(H_V)=\RR$ 
(since the perturbation is a point interaction)
and $\sigma_{\text{r}}(H_V)=\emptyset$ (since $H_V$ is 
$J$-self-adjoint, where $J$ is the complex conjugation operator).
Therefore, $\la\in\Com^+$ must be a discrete eigenvalue for $H_V$. 
\qed
\subsection{Proof of Theorem~\ref{thm:p-norm}}
The proof uses complex interpolation and the 
result of Theorem~\ref{thm:ell1}. Observe that 
\eqref{encl.ellp} holds trivially for $\la\in\Real$.
For $\la\in\Com^+$, let us consider the operator family 
\[
T_{z}:=|V|^{z p/2}(H_0-\la)^{-1}|V|^{z p/2},
\]
for $z\in\CC$ with $0\leq\Re z\leq 1$. 
First, we note that $T_1$ is a bounded operator 
under our hypothesis on $V$. Indeed, $|V|^{p/2}$ maps
$L^2(\Real)$ to $H^{-1}(\Real)$ by duality and 
$(H_0-\la)^{-1}$ is an isomorphism between $H^{-1}(\Real)$ 
and $H^1(\Real)$, while the latter space is mapped by 
$|V|^{p/2}$ back to $L^2(\Real)$.
Further, we note that $T_{z}$ is 
continuous in the closed strip $0\leq\Re z\leq 1$, analytic 
in its interior and we have  
\[
\sup_{0\leq\Re z\leq 1}\|T_{z}\|\leq\max\Bigl\{\frac{2}{|\Im(\la)|}, \|T_1\|\Bigr\}\,,
\]
see \eqref{L2 resolvent bound H0}. In particular, $T_{z}$ is uniformly bounded for $0\leq\Re z\leq 1$. 
Since $V\in L^p(\RR)$ by the hypothesis, we can 
proceed as in the proof of Theorem~\ref{thm:ell1} and conclude 
\[
\|T_{1+\ii y}\|\leq\||V|^{p/2}(H_0-\la)^{-1}|V|^{p/2}\| 
\leq 
\frac{\sqrt{|\la|+|\Re(\la)|}}{\sqrt{2}|\la|}
\|V\|_p^p\,,
\]
for any $y\in\RR$. Moreover, for all $y\in\RR$, we have also 
the trivial estimate
\[
\|T_{\ii y}\|\leq 
\frac{2}{|\Im(\la)|}\,,
\]
see \eqref{L2 resolvent bound H0}. Thus, Stein's complex interpolation theorem 
(see \eg~\cite[Thm.~V.4.1]{Ste-Wei-1971}) yields the following bound 
for the Birman-Schwinger operator
\begin{equation}\label{estim.interp}
\|K(\la)\|\leq \|T_{1/p}\|
\leq
\frac{(|\la|+|\Re(\la)|)^{\frac{1}{2p}}}{2^{\frac{3}{2p}-1}|\la|^{\frac{1}{p}}}
\frac{\|V\|_p}{|\Im(\la)|^{1-\frac{1}{p}}}\,.
\end{equation}
If $\la$ is an eigenvalue for $H_V$, then the standard Birman-Schwinger 
principle implies that the expression on the right-hand-side of \eqref{estim.interp} cannot be 
strictly less than 1, thus yielding the estimate~\eqref{encl.ellp}. 
\qed
\subsection{Proofs of Corollaries \ref{Cor:ineq.imag.L1} and \ref{Cor:ineq.imag}}
Let $1\leq p <\infty$.
For $\la\in\CC^+$, let $x=\Re(\la)$ and $y=\Im(\la)$.
Then \eqref{encl.ellp} (resp.~\eqref{encl.ell1}) can be written, equivalently, as
\begin{equation}\label{constr.20}
2^{3-2p}(x^2+y^2)y^{2p-2} \leq \bigl(\sqrt{x^2+y^2}+|x|\bigr)\|V\|_p^{2p}\,.
\end{equation}
Since the region corresponding to \eqref{constr.20} is 
symmetric with respect to the imaginary axis, there is no 
loss of generality in assuming that $x\geq0$. Further 
with the change of the variables $x=\|V\|_p^{\frac{2p}{2p-1}}t$, 
$y=\|V\|_p^{\frac{2p}{2p-1}}s$, \eqref{constr.20} reads as
\begin{equation}\label{constr.21}
2^{3-2p}(t^2+s^2)s^{2p-2} \leq \sqrt{t^2+s^2}+t\,.
\end{equation}
Next, let us consider the function 
\begin{equation}
f(x)=\frac{x}{1+x^2}+\frac{x}{\sqrt{1+x^2}}
\end{equation}
for $x\geq0$. Letting $x=\tan(\alpha)$ for $\alpha\in[0,\frac{\pi}{2})$, 
we easily get 
$f(\tan(\alpha))=\sin(\alpha)+\frac{1}{2}\sin(2\alpha)$
which attains its global maximum at $\alpha=\frac{\pi}{3}$.
Consequently, we have 
\[
\sup_{x\geq0}f(x)=\sup_{\alpha\in[0,\frac{\pi}{2}]}f(\tan(\alpha))
=\frac{3\sqrt{3}}{4}=f(\sqrt{3})\,.
\]
Hence, it follows from \eqref{constr.21} that
$2^{3-2p}s^{2p-1}\leq f(s/t)\leq3\sqrt{3}/4$
for all $s\geq0$ and $t>0$. Therefore, 
\begin{equation}
s\leq 2\Bigl(\frac{3\sqrt{3}}{16}\Bigr)^{\frac{1}{2p-1}}\,
\end{equation}
and \eqref{constr.21} becomes equality if and only if 
\[(s,t)=\bigg(2\Bigl(\frac{3\sqrt{3}}{16}\Bigr)^{\frac{1}{2p-1}}, 
\,\frac{2}{\sqrt{3}}\Bigl(\frac{3\sqrt{3}}{16}\Bigr)^{\frac{1}{2p-1}}\bigg)\,,
\]
proving the claim.
\qed
\subsection{Proof of Theorem~\ref{thm:p-norm.1}}
Let $\la\in\Com^+$ and denote by $\Omega$ the support of $V$. 
For arbitrary weight function $\rho>0$, the Schur test yields
\begin{equation*}
\|K(\la)\|\leq\Bigg(\sup_{x\in\Omega}\int_{\Omega}|K_\la(x,y)|
\frac{\d y}{\rho(x,y)}\Bigg)^{1/2}\Bigg(\sup_{y\in\Omega}
\int_{\Omega}|K_\la(x,y)|\rho(x,y)\, \d x\Bigg)^{1/2}\,.
\end{equation*}	
By choosing the weight function 
$$
\rho(x,y):=|V(x)|^{1/2}|V(y)|^{-1/2}, \quad x,y\in\Omega,
$$ 
and using \eqref{assym.Green.fn}, we obtain 
\begin{equation}\label{estim.BS.Schur}
\|K(\la)\|\leq
\sup_{x\in\Omega}\int_{\Omega}|G_\la(x,y)||V(y)|\,\d y
\leq \sup_{x\in\Real}\int_{\Real}|G_\la(x,y)||V(y)|\,\d y\,.
\end{equation}
On the other hand, in view of \eqref{Green.fn}, we have 
\begin{equation*}
\begin{aligned}
\sup_{x\geq0}\int_{\Real}|G_\la(x,y)||V(y)| \,\d y 
&\leq\sup_{x\geq0}\int_{-\infty}^0|G_\la(x,y)||V(y)|\,\d y 
+ \sup_{x\geq0}\int_0^{\infty}|G_\la(x,y)||V(y)|\,\d y\\
&\leq\frac{1}{\sqrt{2|\la|}}\int_{-\infty}^0 e^{\Re(\sqrt{\la})y}|V(y)|\,\d y
+\frac{1}{\sqrt{|\la|}}\int_0^{\infty} e^{-\Im(\sqrt{\la})y}|V(y)|\,\d y\,.
\end{aligned}
\end{equation*} 
By the H\"older inequality, 
\begin{equation*}
\begin{aligned}
\int_{-\infty}^0 e^{\Re(\sqrt{\la})y}|V(y)|\,\d y 
\leq 
\|V\|_{p,-}\Bigg(\int_{-\infty}^0 e^{q\Re(\sqrt{\la})y}\,\d y\Bigg)^{1/q}
=\frac{\|V\|_{p,-}}{\sqrt[q]{q\Re(\sqrt{\la})}}
\end{aligned}
\end{equation*}
and, similarly,
\begin{equation*}
\begin{aligned}
\int_0^{\infty} e^{-\Im(\sqrt{\la})y}|V(y)|\,\d y 
\leq\frac{\|V\|_{p,+}}{\sqrt[q]{q\Im(\sqrt{\la})}}\,.
\end{aligned}
\end{equation*}
Therefore, we have
\begin{equation}\label{estim.Schur.11}
\sup_{x\geq0}\int_{\RR}|G_\la(x,y)||V(y)|\,\d y 
\leq 
\frac{1}{\sqrt[q]{q}\sqrt{|\la|}}
\Bigg(\frac{\|V\|_{p,-}}{\sqrt{2}\sqrt[q]{\Re(\sqrt{\la})}}
+\frac{\|V\|_{p,+}}{\sqrt[q]{\Im(\sqrt{\la})}}\Bigg)
\end{equation}
and, analogously,
\begin{equation}\label{estim.Schur.12}
\sup_{x\leq0}\int_{\RR}|G_\la(x,y)||V(y)|\,\d y 
\leq \frac{1}{\sqrt[q]{q}\sqrt{|\la|}}
\Bigg(\frac{\|V\|_{p,-}}{\sqrt[q]{\Re(\sqrt{\la})}}
+\frac{\|V\|_{p,+}}{\sqrt{2}\sqrt[q]{\Im(\sqrt{\la})}}\Bigg).
\end{equation}
Recalling \eqref{estim.BS.Schur}, we conclude that the 
maximum of the quantities on the right-hand-sides of 
\eqref{estim.Schur.11} and \eqref{estim.Schur.12} dominates
the operator norm of the Birman-Schwinger operator $K(\la)$
and the result immediately follows from the 
Birman-Schwinger principle as in the proof 
of Theorem~\ref{thm:p-norm}.
\qed

\subsection{Proof of Theorem~\ref{thm:Wigner-vonNeumann}}
First we establish an auxiliary lemma. The proof is a straightforward calculation and is omitted.
\begin{Lemma}\label{lemma free solution}
	Let $\la>0$ be arbitrary. Then the function
	\begin{align}\label{def.u}
	u(x)=\e^{\sqrt{\la}x}\mathbf{1}_{x\leq 0}+
	\sqrt{2}\sin(\sqrt{\la}x+\pi/4)\mathbf{1}_{x\geq 0}
	\end{align}
	satisfies $H_0u=\la u$ in the weak sense. Moreover, $u\in L^2(\R_-)$ and 
	$u\notin L^2(\R_+)$.
\end{Lemma}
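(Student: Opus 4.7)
The plan is to verify the three assertions by direct computation, with a single observation at the origin that upgrades pointwise identities on $\R\setminus\{0\}$ to a genuine weak eigenvalue equation on all of $\R$.

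First I would differentiate the two branches of $u$ twice. On $(0,\infty)$, where $u(x)=\sqrt{2}\sin(\sqrt{\la}x+\pi/4)$, one gets $u''(x)=-\la u(x)$, so $\sgn(x)(-u''(x))=\la u(x)$; on $(-\infty,0)$, where $u(x)=\e^{\sqrt{\la}x}$, one gets $u''(x)=\la u(x)$, and multiplying by $\sgn(x)=-1$ again gives $\sgn(x)(-u''(x))=\la u(x)$. Thus the eigenvalue equation is satisfied classically off the origin, and the only remaining point is to justify it globally in a weak sense.

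Next I would check $C^1$-matching at $x=0$, which is the only delicate step. The values $u(0^{\pm})=1$ coincide because $\sqrt{2}\sin(\pi/4)=1$, and the derivatives $u'(0^{\pm})=\sqrt{\la}$ coincide because $\sqrt{2}\sqrt{\la}\cos(\pi/4)=\sqrt{\la}$. Hence $u\in C^1(\R)$, so the distributional derivative $u''$ carries no point mass at $0$ and agrees with the piecewise classical $u''$. Consequently $\sgn(x)(-u'')=\la u$ holds in $\mathcal{D}'(\R)$, which on pairing against $\phi\in C_c^{\infty}(\R)$ reproduces the form statement for $H_0$ and is the desired weak formulation. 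Had either $u$ or $u'$ jumped at $0$, a singular contribution supported at the origin would have obstructed this identity; the matching is what prevents it.

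Finally, the integrability claims follow from one-line calculations: $\int_{-\infty}^0|u|^2\,\rd x=\int_{-\infty}^0\e^{2\sqrt{\la}x}\,\rd x=1/(2\sqrt{\la})<\infty$, so $u\in L^2(\R_-)$. On the right half-line, using $2\sin^2(\theta)=1-\cos(2\theta)$ gives $|u(x)|^2=1+\sin(2\sqrt{\la}x)$, whose integral over $[0,R]$ equals $R+(1-\cos(2\sqrt{\la}R))/(2\sqrt{\la})$, which diverges as $R\to\infty$, so $u\notin L^2(\R_+)$. No real obstacle arises in the proof; the only step worth isolating is the $C^1$-matching at the origin, which is precisely what carries the weak-sense content.
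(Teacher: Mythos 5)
Your verification is correct and is exactly the "straightforward calculation" that the paper declares and omits: piecewise differentiation off the origin, $C^1$-matching at $0$ (both $u(0^\pm)=1$ and $u'(0^\pm)=\sqrt\la$) to rule out a singular contribution and upgrade to a distributional identity, and the two one-line integrals for the $L^2$ claims. Nothing further is needed.
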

%
%
%
%

\smallskip
\noindent
{\emph{Proof of Theorem~\ref{thm:Wigner-vonNeumann}}}. We first 
discuss the case $n=1$. Set $\psi(x)=u(x)\chi(x)$, with $u$ given by 
\eqref{def.u} and with $\chi\in C^{\infty}(\R)$ to be chosen later. Then
\begin{align*}
(H_0-\lambda) \psi(x)=-\sgn(x) (2u'(x)\chi'(x)+u(x)\chi''(x))\,.
\end{align*}
Selecting 
\begin{align}\label{def. V}
V=2\frac{u'\chi'}{u\chi}+\frac{\chi''}{\chi}\,,
\end{align}
the equation $H_V\psi=\lambda\psi$ is satisfied by definition, 
provided that $V$ is well-defined. The issue is of course 
that~$u$ has zeros on $\R_+$. To cancel these we first set
\begin{align*}
g(x)=\int_0^{x}\sin^2(\sqrt{\la}t+\pi/4)\,\rd t,\quad x\in\RR\,,
\end{align*}
and choose
\begin{align}\label{def. chi}
\chi(x)=(1+g(x)^2)^{-1},\quad x\in\R\,.
\end{align}
We then get
\begin{align*}
V=\frac{8g^2g'^2}{(1+g^2)^2}-\frac{2(g'^2+gg'')}{1+g^2}-\frac{4gg'u'}{u(1+g^2)}\,.
\end{align*}
Since $g'/u$ vanishes on the zero set of $u$ and $g(x)\geq c|x|$ for 
some $c>0$ and all sufficiently large $x$, we see that~$V$ 
satisfies~\eqref{bound Vn} for $n=1$. For arbitrary $n\in\N$ we 
replace $\chi$ in \eqref{def. chi} by $(n^2+g(x)^2)^{-1}$.   
\qed
%
%
\subsection{Proof of Theorem~\ref{thm:square.well}}
By scaling \eqref{scaling} with $\rho=\sqrt{\mu}$ we may 
assume that $\mu=1$. We make the following Ansatz for the 
wavefunction $\psi$:
\begin{align*}
\psi(x)=\begin{cases}
\e^{\sqrt{\lambda} x}\quad &x\leq 0,\\
A\e^{\I k x}+B\e^{-\I k x}\quad &0\leq x\leq R,\\
C\e^{\I \sqrt{\lambda} x}\quad &x\geq R,
\end{cases}
\end{align*}
where 
\begin{align}\label{square well 0}
k^2+V_0=\la,\quad \Im(k)>0,\quad \Im(\sqrt{\la})>0,\quad 
\Re(\sqrt{\la})>0
\end{align}
and $R=R(\eps), k=k(\eps)$ will be chosen later. Taking
\begin{align*}
A=A(k,\la)=\frac{1}{2}+\frac{\sqrt{\la}}{2\I k},\quad 
B=B(k,\la)=\frac{1}{2}-\frac{\sqrt{\la}}{2\I k},
\end{align*}
it follows that $\psi,\psi'$ are continuous at $x=0$. 
It is easy to see that there exists $C\in\C$ such that 
$\psi,\psi'$ are continuous at $x=R$ if and only if
\begin{align}\label{square well 1}
\sqrt{\la}=-k \frac{B-A\e^{2\I kR}}{B+A\e^{2\I kR}}\,.
\end{align} 
We set 
\begin{align}\label{square well 1.5}
k=k(\eps)=-1+\I\eps,\quad R=R(\eps)=\frac{|\ln\eps|}{2\eps}+\theta
\end{align}
with $\theta=\theta(\eps)\in [0,\pi]$ to be chosen later. 
Changing variables from $\la$ to $\omega$ v.i.z. 
$\sqrt{\la}=1+\I\eps+\omega$ and setting
\begin{align*}
f_{\eps}(\omega)=1+\I\eps+\omega+k\frac{B-A\e^{2\I kR}}{B+A\e^{2\I kR}},
\quad \omega \in B(0,C\eps^2),
\end{align*}
with $C$ independent of $\eps$ to be chosen sufficiently large,
we see that \eqref{square well 1} is equivalent to $f_{\eps}(\omega)=0$. 
Since $\e^{-2\im k R}=\eps(1+\mathcal{O}(\eps))$ and 
$A/B=(-1+\I)/(1+\I)+\mathcal{O}(\eps)$ as $\eps\to0^+$, 
we obtain, by choosing $\theta(\eps)$ such that 
$(-1+\I)/(1+\I)\e^{-2\I R(\eps)}=-1$,
\begin{align*}
|f_{\eps}(\omega)-\omega|&=
|1+\I\eps+(-1+\I\eps)(1-2A/B\e^{2\I k R}+\mathcal{O}(\eps^2))|\\
&=|2\I\eps+2A/B\eps\e^{-2\I R}+\mathcal{O}(\eps^2)|=\mathcal{O}(\eps^2)
\end{align*}
as $\eps\to0^+$.
It follows that, for $\eps$ sufficiently small and $C$ sufficiently large,
\begin{align*}
|f_{\eps}(\omega)-\omega|<|\omega|,\quad \omega\in\partial B(0,C\eps^2)\,.
\end{align*}
By Rouch\'e's theorem $f_{\eps}$ has exactly one zero in $B(0,C\eps^2)$.
This implies the existence of an eigenvalue $\lambda$ with the claimed properties.
Recalling the relation between $k$ and $V_0$ in \eqref{square well 0} and using 
\eqref{square well 1.5} we get the estimate
\begin{align}\label{norm of Veps}
\|V(\eps)\|_p\approx\eps R(\eps)^{1/p}=\mathcal{O}(\eps^{1-1/p}|\ln\eps|^{1/p})
\end{align}
as $\eps\to0^+$. This proves \eqref{square well limit V}.
\qed

\smallskip
\noindent
\textbf{Sharpness of exponents in \eqref{encl.ellp}:} 
Since $|\la|\approx|\Re(\la)|\approx 1$ in the above example, 
in equality \eqref{encl.ellp} says that
\begin{align*}
\eps^{1-1/p}\leq C\|V(\eps)\|_p\,.
\end{align*}
Comparing this to \eqref{norm of Veps} we see that this is 
essentially sharp (up to logarithms). In particular, 
the exponent $1-1/p$ of $\eps\approx \Im(\la)$ cannot  be made smaller.
\qed

\subsection{Proof of Theorem~\ref{thm:weak.coupling}}
As usual, we split the Birman-Schwinger operator into a singular 
and a regular part (as $\la\to 0$),
\begin{align*}
K_{\la}=L_{\la}+M_{\la}\,.
\end{align*}
We first ignore $M_{\la}$ and concentrate on 
\begin{align*}
L_{\la}=\frac{1}{2\alpha\sqrt{\la}}\left(|f^+\rangle\langle g^+|-|f^+\rangle\langle g^-|+|f^-\rangle\langle g^+|-|f^-\rangle\langle g^-|\right)\,,
\end{align*}
where we set
\begin{align*}
f^{\pm}(x)=\mathbf{1}_{\pm}(x)|V(x)|^{1/2},\quad g^{\pm}(x)=\mathbf{1}_{\pm}(x)V_{1/2}(x),\quad \mathbf{1}_{\pm}=\mathbf{1}_{\R_{\pm}}\,.
\end{align*}
In the basis $\{f_+,f_-\}$, 
\begin{align*}
L_{\la}=\frac{1}{2\alpha\sqrt{\la}}
\begin{pmatrix}
v_+& -v_-\\
v_+&-v_-
\end{pmatrix},
\end{align*}
from which we see that $L_{\la}$ has rank $1$. Then
\begin{align*}
\det(I+\eps L_{\la})=1+\eps\Tr L_{\la}=1+\frac{\eps}{2\alpha\sqrt{\la}}v_{\rm sgn}
\end{align*}
Hence,
\begin{align*}
0\in\sigma(I+\eps L_{\la})\iff\det(I+\eps L_{\la})=0
\iff
-\sqrt{\la}=\frac{\eps}{2\alpha}v_{\rm sgn}\,.
\end{align*}
Since we are assuming that $\Im(\sqrt{\lambda})>0$ the rightmost equality can only hold if $\Re(v_{\rm sgn})+\Im(v_{\rm sgn})<0$. We also see that 
\begin{align*}
\lambda\in \C^+ \iff \Re(\sqrt{\la})>0 \iff \Re(v_{\rm sgn})<\Im(v_{\rm sgn}).\, 
\end{align*}
We now repeat the argument, but this time taking $M_{\lambda}$ into account. The key observation is that the function $(x,y)\mapsto|\lambda||M_{\lambda}(x,y)|^2$ is bounded from above, up to a constant, by the $L^1$-majorant $(x,y)\mapsto |V(x)||V(y)|$. 
%
Clearly, 
\begin{align*}
\lim_{\lambda\to 0}|\lambda||M_{\lambda}(x,y)|^2=0\,.
\end{align*}
Hence, by dominated convergence,
\begin{align}\label{HS norm of Mlambda}
\|M_{\lambda}\|_{\rm HS}=o(|\lambda|^{-1/2}),\quad (\lambda\to 0)\,.
\end{align}
Since we expect that $c^{-1}\eps\leq|\lambda|^{1/2}\leq c\eps$ for $\lambda=\lambda(\eps)$ and some $c>0$, we assume this from now on; we will see later that this assumption is indeed justified. Since then $\eps\|M_{\lambda}\|<1$ for sufficiently small $\eps$, it follows that $(I+\eps M_{\lambda})$ is invertible and 
\begin{align*}
0\in\sigma(I+\eps K_{\lambda})\iff\det(I+\eps (I+\eps M_{\lambda})^{-1} L_{\lambda})=0\,.
\end{align*}
Similarly as before,
\begin{align*}
\det(I+\eps (I+\eps M_{\lambda})^{-1} L_{\lambda})&=1+\eps\Tr (I+\eps M_{\lambda})^{-1}L_{\lambda}\\
&=1+\frac{\eps}{2\alpha\sqrt{\lambda}}v_{\rm sgn}+r(\eps,\lambda)\,,
\end{align*}
where $r(\eps,\lambda)=O(\eps\|M_{\lambda}\|)$.
We now change variables from $\lambda$ to $z=\sqrt{\lambda}$. Clearly, $f_{\eps}(z):=\det(I+\eps (I+\eps M_{\lambda})^{-1} L_{\lambda})$ is an analytic function for $z\in B(-\frac{\eps}{2\alpha}v_{\rm sgn},\rho(\eps))$, provided $\rho(\eps)=o(\eps)$; we may choose $\rho(\eps)=\max_{c^{-1}\eps\leq|\lambda|^{1/2}\leq c\eps}|r(\eps,\lambda)|$. Then we have
\begin{align*}
\Bigl|f_{\eps}(z)-\Bigl(1+\frac{\eps}{2\alpha z}v_{\rm sgn}\Bigr)\Bigr|\leq \rho(\eps),\quad z\in\partial B\Bigl(-\frac{\eps}{2\alpha}v_{\rm sgn},\rho(\eps)\Bigr)\,.
\end{align*}
On the other hand, since $v_{\rm sgn}\neq 0$, we have
\begin{align*}
\Bigl|1+\frac{\eps}{2\alpha z}v_{\rm sgn}\Bigr|\geq \frac{2|\alpha|\rho(\eps)}{\eps|v_{\rm sgn}|+2|\alpha|\rho(\eps)}\geq \frac{\rho(\eps)}{\eps|v_{\rm sgn}|},\quad z\in\partial B\Bigl(-\frac{\eps}{2\alpha}v_{\rm sgn},\rho(\eps)\Bigr)\,.
\end{align*}
Hence, for $\eps<1/|v_{\rm sgn}|$, we have
\begin{align*}
\Bigl|f_{\eps}(z)-\Bigl(1+\frac{\eps}{2\alpha z}v_{\rm sgn}\Bigr)\Bigr|<\Bigl|1+\frac{\eps}{2\alpha z}v_{\rm sgn}\Bigr|,\quad z\in\partial B\Bigl(-\frac{\eps}{2\alpha}v_{\rm sgn},\rho(\eps)\Bigr)\,.
\end{align*}
By Rouch\'e's theorem, $f_{\eps}$ has exactly one zero in $B(-\frac{\eps}{2\alpha}v_{\rm sgn},\rho(\eps))$. Since $\rho(\eps)=o(\eps)$ this proves the theorem.
\qed

\smallskip
\noindent
\textbf{Sharpness of \eqref{encl.ell1}:} 
If we assume that $V$ is real-valued and $v_{\rm sgn}<0$, then the assumptions of Theorem~\ref{thm:weak.coupling} are satisfied. If we assume in addition that $V$ is supported either on~$\R_+$ or on~$\R_-$, then $v_{\rm sgn}=-\|V\|_1$. Since $1/\alpha^2=2\I$ we see that
\begin{align*}
\lambda(\eps)=\frac{\I}{2}\|\eps V\|_1^2+o(\eps^2),\quad \Re(\la(\eps))=o(\eps^2)\,.
\end{align*}
This shows that inequality (1.4) is saturated in the limit $\eps\to 0$.
\qed
\subsection{Proof of Theorem~\ref{thm:LT}}
We start with the Schatten bound
\begin{align}\label{Schatten bound}
\|W_1(H_0-\la)^{-1}W_2\|_{\mathfrak{S}^{2p}}
\leq 
C_p|\Im(\la)|^{1-\frac{1}{p}}|\la|^{-\frac{1}{2p}}\|W_1\|_{2p}\|W_2\|_{2p}\,,
\quad 1\leq p\leq \infty\,,
\end{align}
%
which is a consequence of \eqref{L2 resolvent bound H0} and the pointwise bound for the resolvent kernel. Here and in the following we use the notation
\begin{align*}
\|A\|_{\mathfrak{S}^p}^p:=\sum_{j}s_j(A)^p
\end{align*}
where $s_j(A)$ are the singular numbers of the compact operator $A$.
We make the conformal transformation $z=\lambda^2\in \C\setminus[0,\infty)$ and apply Theorem 3.1 in \cite{Frank-TAMS-18} to the analytic family $z\mapsto K(\lambda(z))$, which by \eqref{Schatten bound} satisfies the bound
\begin{align*}
\|K(\lambda(z))\|_{\mathfrak{S}^{2p}}\leq C_p|\Im(\sqrt{z})|^{1-\frac{1}{p}}|z|^{-\frac{1}{4p}}\|V\|_{p}\,.
\end{align*}
In particular, for $p=1$, we have
\begin{align*}
\|K(\lambda(z))\|_{\mathfrak{S}^2}\leq C|z|^{-\frac{1}{4}}\|V\|_{p}\,.
\end{align*}
By \cite[Theorem~3.1]{Frank-TAMS-18}, for any $\eps>0$, there exists $C$ such that
\begin{align*}
\sum_j \delta(z_j)|z_j|^{-\frac{1}{2}+\frac{1}{2}(-\frac{1}{2}+\eps)_+}
\leq C\|V\|_{1}^{2(1+(-\frac{1}{2}+\eps)_+)}\,.
\end{align*}
Taking $\eps=1/2$ and using the distortion bound $|\Im(\la)|\approx \delta(z_j)|z_j|^{-\frac{1}{2}}$ yields the claim.
\qed
\subsection{Proof of Theorem~\ref{thm:nr.of.EV}}
%
%
We write $\la=k^2$ for the spectral parameter, where $k\in\C\setminus\{0\}$. This amounts to a double covering of the punctured complex $\la$-plane, with $\Im(k)<0$ corresponding to the second (unphysical) sheet. 
From \eqref{G.la.Phi}--\eqref{Phi} we have that
\begin{align*}
|G(\la(k))|^2\leq \frac{1}{|\la|}\e^{2\Im(k)_-(|x|+|y|)}\,,
\end{align*}
which implies
\begin{align*}
\|K(\lambda(k))\|_{\mathfrak{S}^2}\leq \frac{1}{|k|}\int_{\R}\e^{2\Im(k)_-|x|}|V(x)|\,\rd x\,.
\end{align*}
This is analogous to \cite[Proposition~4.1]{Fra-Lap-Saf-2016}, and the proof follows from the same arguments as in \cite{Fra-Lap-Saf-2016}.
\qed

\section*{Acknowledgement}
\noindent
The authors are grateful to Professor Gian-Michele Graf for stimulating discussions.

{\small
	\bibliographystyle{acm}
	\bibliography{bib_indef_SL}

\begin{thebibliography}{10}

\bibitem{Abr-Asl-Dav-01}
{\sc Abramov, A.~A., Aslanyan, A., and Davies, E.~B.}
\newblock Bounds on complex eigenvalues and resonances.
\newblock {\em J. Phys. A 34}, 1 (2001), 57--72.

\bibitem{Adams}
{\sc Adams, R.~A.}
\newblock {\em Sobolev Spaces}.
\newblock Academic Press, New York, 1975.

\bibitem{Beh-open-2013}
{\sc Behrndt, J.}
\newblock An open problem: accumulation of nonreal eigenvalues of indefinite
  {S}turm-{L}iouville operators.
\newblock {\em Integral Equations Operator Theory 77}, 3 (2013), 299--301.

\bibitem{Beh-Kat-Tru-2009}
{\sc Behrndt, J., Katatbeh, Q., and Trunk, C.}
\newblock Non-real eigenvalues of singular indefinite {S}turm-{L}iouville
  operators.
\newblock {\em Proc. Amer. Math. Soc. 137}, 11 (2009), 3797--3806.

\bibitem{Beh-Phil-Tru-2013}
{\sc Behrndt, J., Philipp, F., and Trunk, C.}
\newblock Bounds on the non-real spectrum of differential operators with
  indefinite weights.
\newblock {\em Math. Ann. 357}, 1 (2013), 185--213.

\bibitem{Beh-Schm-Tru-2017}
{\sc Behrndt, J., Schmitz, P., and Trunk, C.}
\newblock Estimates for the non-real spectrum of a singular indefinite
  {S}turm-{L}iouville operator on $\mathbb{R}$.
\newblock {\em Proc. Appl. Math. Mech. 17\/} (2017), 859--860.

\bibitem{Beh-Schm-Tru-2018}
{\sc Behrndt, J., Schmitz, P., and Trunk, C.}
\newblock Spectral bounds for singular indefinite {S}turm-{L}iouville operators
  with {$L^1$}-potentials.
\newblock {\em Proc. Amer. Math. Soc. 146}, 9 (2018), 3935--3942.

\bibitem{Beh-Schm-Tru-2019}
{\sc Behrndt, J., Schmitz, P., and Trunk, C.}
\newblock Spectral bounds for indefinite singular {S}turm--{L}iouville
  operators with uniformly locally integrable potentials.
\newblock {\em J. Differential Equations 267}, 1 (2019), 468--493.

\bibitem{Boegli-CMP-17}
{\sc B\"{o}gli, S.}
\newblock Schr\"{o}dinger operator with non-zero accumulation points of complex
  eigenvalues.
\newblock {\em Comm. Math. Phys. 352}, 2 (2017), 629--639.

\bibitem{Cas-Ibr-Kre-Sta-inprep}
{\sc Cassanno, B., Ibrogimov, O.~O., Krej\v{c}i\v{r}{\'\i}k, D., and {\v
  S}tampach, F.}
\newblock Location of eigenvalues of non-self-adjoint discrete {D}irac
  operators.
\newblock {\em To appear in Ann. Henri Poincar\'{e}, \url{ArXiv:1910.10710
  [math.SP]}\/} (2019).

\bibitem{Coss2019}
{\sc Cossetti, L.}
\newblock Bounds on eigenvalues of perturbed {L}am\'e operators with complex
  potentials.
\newblock {\em Preprint, \url{ArXiv:1904.08445v1 [math.SP]}\/} (2019).

\bibitem{Cuenin-JFA-17}
{\sc Cuenin, J.-C.}
\newblock Eigenvalue bounds for {D}irac and fractional {S}chr\"{o}dinger
  operators with complex potentials.
\newblock {\em J. Funct. Anal. 272}, 7 (2017), 2987--3018.

\bibitem{Cuenin-improved-2019}
{\sc Cuenin, J.-C.}
\newblock Improved eigenvalue bounds for {S}chr{\"o}dinger operators with
  slowly decaying potentials.
\newblock {\em Commun. Math. Phys.
  \url{https://doi.org/10.1007/s00220-019-03635-w}\/} (2019).

\bibitem{Cue-Ken-CPDE17}
{\sc Cuenin, J.-C., and Kenig, C.~E.}
\newblock {$L^p$} resolvent estimates for magnetic {S}chr\"{o}dinger operators
  with unbounded background fields.
\newblock {\em Comm. Partial Differential Equations 42}, 2 (2017), 235--260.

\bibitem{Cue-Lap-Tre-14}
{\sc Cuenin, J.-C., Laptev, A., and Tretter, C.}
\newblock Eigenvalue estimates for non-selfadjoint {D}irac operators on the
  real line.
\newblock {\em Ann. Henri Poincar\'{e} 15}, 4 (2014), 707--736.

\bibitem{Cuenin-Siegl-LMP-18}
{\sc Cuenin, J.-C., and Siegl, P.}
\newblock Eigenvalues of one-dimensional non-self-adjoint {D}irac operators and
  applications.
\newblock {\em Lett. Math. Phys. 108}, 7 (2018), 1757--1778.

\bibitem{Cue-Tre-JMAA16}
{\sc Cuenin, J.-C., and Tretter, C.}
\newblock Non-symmetric perturbations of self-adjoint operators.
\newblock {\em J. Math. Anal. Appl. 441}, 1 (2016), 235--258.

\bibitem{Curgus-Langer-1989}
{\sc \'{C}urgus, B., and Langer, H.}
\newblock A {K}re\u{\i}n space approach to symmetric ordinary differential
  operators with an indefinite weight function.
\newblock {\em J. Differential Equations 79}, 1 (1989), 31--61.

\bibitem{Dav-Nath02}
{\sc Davies, E.~B., and Nath, J.}
\newblock Schr\"{o}dinger operators with slowly decaying potentials.
\newblock {\em J. Comput. Appl. Math. 148}, 1 (2002), 1--28.
\newblock On the occasion of the 65th birthday of Professor Michael Eastham.

\bibitem{Dem-Hans-Kat-JFA-09}
{\sc Demuth, M., Hansmann, M., and Katriel, G.}
\newblock On the discrete spectrum of non-selfadjoint operators.
\newblock {\em J. Funct. Anal. 257}, 9 (2009), 2742--2759.

\bibitem{Dem-Hans-Kat13}
{\sc Demuth, M., Hansmann, M., and Katriel, G.}
\newblock Eigenvalues of non-selfadjoint operators: a comparison of two
  approaches.
\newblock In {\em Mathematical physics, spectral theory and stochastic
  analysis}, vol.~232 of {\em Oper. Theory Adv. Appl.} Birkh\"{a}user/Springer
  Basel AG, Basel, 2013, pp.~107--163.

\bibitem{Enblom-LMP-16}
{\sc Enblom, A.}
\newblock Estimates for eigenvalues of {S}chr\"{o}dinger operators with
  complex-valued potentials.
\newblock {\em Lett. Math. Phys. 106}, 2 (2016), 197--220.

\bibitem{Fan-Kre-LMP19}
{\sc Fanelli, L., and Krej{\v{c}}i{\v{r}}{\'{\i}}k, D.}
\newblock Location of eigenvalues of three-dimensional non-self-adjoint {D}irac
  operators.
\newblock {\em Lett. Math. Phys.\/} (jan 2019).

\bibitem{Fan-Kre-Veg-JFA-2018}
{\sc Fanelli, L., Krej\v{c}i\v{r}\'{\i}k, D., and Vega, L.}
\newblock Absence of eigenvalues of two-dimensional magnetic {S}chr\"{o}dinger
  operators.
\newblock {\em J. Funct. Anal. 275}, 9 (2018), 2453--2472.

\bibitem{Fan-Kre-Veg-JST18}
{\sc Fanelli, L., Krej\v{c}i\v{r}\'{i}k, D., and Vega, L.}
\newblock Spectral stability of {S}chr\"{o}dinger operators with subordinated
  complex potentials.
\newblock {\em J. Spectr. Theory 8}, 2 (2018), 575--604.

\bibitem{Frank-BLMS-11}
{\sc Frank, R.~L.}
\newblock Eigenvalue bounds for {S}chr\"{o}dinger operators with complex
  potentials.
\newblock {\em Bull. Lond. Math. Soc. 43}, 4 (2011), 745--750.

\bibitem{Frank-TAMS-18}
{\sc Frank, R.~L.}
\newblock Eigenvalue bounds for {S}chr\"{o}dinger operators with complex
  potentials. {III}.
\newblock {\em Trans. Amer. Math. Soc. 370}, 1 (2018), 219--240.

\bibitem{Fra-Lap-Lieb-Sei-LMP06}
{\sc Frank, R.~L., Laptev, A., Lieb, E.~H., and Seiringer, R.}
\newblock Lieb-{T}hirring inequalities for {S}chr\"{o}dinger operators with
  complex-valued potentials.
\newblock {\em Lett. Math. Phys. 77}, 3 (2006), 309--316.

\bibitem{Fra-Lap-Saf-2016}
{\sc Frank, R.~L., Laptev, A., and Safronov, O.}
\newblock On the number of eigenvalues of {S}chr\"{o}dinger operators with
  complex potentials.
\newblock {\em J. Lond. Math. Soc. (2) 94}, 2 (2016), 377--390.

\bibitem{Fra-Sim-JST-17}
{\sc Frank, R.~L., and Simon, B.}
\newblock Eigenvalue bounds for {S}chr\"{o}dinger operators with complex
  potentials. {II}.
\newblock {\em J. Spectr. Theory 7}, 3 (2017), 633--658.

\bibitem{Hen-Kre-JST2017}
{\sc Henry, R., and Krej\v{c}i\v{r}\'{\i}k, D.}
\newblock Pseudospectra of the {S}chr\"{o}dinger operator with a discontinuous
  complex potential.
\newblock {\em J. Spectr. Theory 7}, 3 (2017), 659--697.

\bibitem{Ibr-Kre-Lap}
{\sc Ibrogimov, O.~O., Krej\v{c}i\v{r}{\'\i}k, D., and Laptev, A.}
\newblock Sharp bounds for eigenvalues of biharmonic operators with complex
  potentials in low dimensions.
\newblock {\em Preprint, \url{ArXiv:1903.01810v1 [math.SP]}\/} (2019).

\bibitem{Ibr-Sta-19}
{\sc Ibrogimov, O.~O., and \v{S}tampach, F.}
\newblock Spectral enclosures for non-self-adjoint discrete {S}chr\"{o}dinger
  operators.
\newblock {\em Integral Equations Operator Theory 91}, 6 (2019), Paper No. 53,
  15.

\bibitem{Kara-Trunk-2009}
{\sc Karabash, I., and Trunk, C.}
\newblock Spectral properties of singular {S}turm-{L}iouville operators with
  indefinite weight {${\rm sgn}\, x$}.
\newblock {\em Proc. Roy. Soc. Edinburgh Sect. A 139}, 3 (2009), 483--503.

\bibitem{Kato}
{\sc Kato, T.}
\newblock {\em Perturbation Theory for Linear Operators}.
\newblock Springer-Verlag, Berlin, 1966.

\bibitem{Krej-Siegl-JFA2019}
{\sc Krej\v{c}i\v{r}\'{\i}k, D., and Siegl, P.}
\newblock Pseudomodes for {S}chr\"{o}dinger operators with complex potentials.
\newblock {\em J. Funct. Anal. 276}, 9 (2019), 2856--2900.

\bibitem{Lap-Saf-CMP09}
{\sc Laptev, A., and Safronov, O.}
\newblock Eigenvalue estimates for {S}chr\"{o}dinger operators with complex
  potentials.
\newblock {\em Comm. Math. Phys. 292}, 1 (2009), 29--54.

\bibitem{Lee-Seo-JMAA-19}
{\sc Lee, Y., and Seo, I.}
\newblock A note on eigenvalue bounds for {S}chr\"{o}dinger operators.
\newblock {\em J. Math. Anal. Appl. 470}, 1 (2019), 340--347.

\bibitem{Lev-Seri-2016}
{\sc Levitin, M., and Seri, M.}
\newblock Accumulation of complex eigenvalues of an indefinite
  {S}turm-{L}iouville operator with a shifted {C}oulomb potential.
\newblock {\em Oper. Matrices 10}, 1 (2016), 223--245.

\bibitem{Phil-2019}
{\sc Philipp, F.}
\newblock Relatively bounded perturbations of j-non-negative operators.
\newblock {\em Preprint, \url{ArXiv:1903.03977v2 [math.SP] }\/} (2019).

\bibitem{RS2}
{\sc Reed, M., and Simon, B.}
\newblock {\em Methods of Modern Mathematical Physics, {II}.~{F}ourier
  Analysis.~{S}elf-Adjointness}.
\newblock Academic Press, New York, 1975.

\bibitem{Safr-BLMS10}
{\sc Safronov, O.}
\newblock Estimates for eigenvalues of the {S}chr\"{o}dinger operator with a
  complex potential.
\newblock {\em Bull. Lond. Math. Soc. 42}, 3 (2010), 452--456.

\bibitem{Ste-Wei-1971}
{\sc Stein, E.~M., and Weiss, G.}
\newblock {\em Introduction to {F}ourier analysis on {E}uclidean spaces}.
\newblock Princeton University Press, Princeton, N.J., 1971.
\newblock Princeton Mathematical Series, No. 32.

\end{thebibliography}
}
\end{document}